\newcommand\blank[1][.6em]{%
  \mbox{\kern.06em\vrule height.5ex}%
  \vbox{\hrule width#1}%
  \hbox{\vrule height.5ex}}
\begin{document}
\begin{frontmatter}

\title{Disjoint cycles of different lengths in graphs and digraphs\tnoteref{thanks}}
\tnotetext[thanks]{The first author was supported by ERC Advanced Grant GRACOL, project no. 320812.
The second author was supported by an FQRNT postdoctoral research grant and CIMI research fellowship.}

\author[dtu]{Julien Bensmail}
\author[toulouse]{Ararat Harutyunyan}
\author[lip]{Ngoc Khang Le}
 
\author[china]{\\ Binlong Li}
\author[salon]{Nicolas Lichiardopol}

\address[dtu]{Department of Applied Mathematics and Computer Science \\ Technical University of Denmark \\ DK-2800 Lyngby, Denmark \\~}
\address[toulouse]{Institut de Math\'ematiques de Toulouse \\ Université Toulouse III \\ 31062 Toulouse Cedex 09, France\\~}
\address[lip]{Laboratoire d'Informatique du Parall\'elisme \\ \'Ecole Normale Sup\'erieure de Lyon \\ 69364 Lyon Cedex 07, France\\~}
\address[china]{Department of Applied Mathematics \\ Northwestern Polytechnical University \\ Xi'an, Shaanxi 710072, P.R. China\\~}
\address[salon]{Lyc\'{e}e A. de Craponne \\ Salon, France\\~}

\begin{abstract}
Understanding how the cycles of a graph or digraph behave in general has always been an important point of graph theory.
In this paper, we study the question of finding a set of $k$ vertex-disjoint cycles (resp. directed cycles) of distinct lengths
in a given graph (resp. digraph). In the context of undirected graphs, we prove that, for every $k \geq 1$, every graph with minimum degree at least $\frac{k^2+5k-2}{2}$
has~$k$ vertex-disjoint cycles of different lengths, where the degree bound is best possible.
We also consider stronger situations, and exhibit degree bounds (some of which are best possible) when \textit{e.g.} the graph is triangle-free, or 
the $k$ cycles are requested to have different lengths congruent to some values modulo some~$r$.
In the context of directed graphs, we consider a conjecture of Lichiardopol concerning the least minimum out-degree
required for a digraph to have $k$ vertex-disjoint directed cycles of different lengths.
We verify this conjecture for tournaments, and, by using the probabilistic method,
for regular digraphs and digraphs of small order.
\end{abstract}

\begin{keyword} 
vertex-disjoint cycles; different lengths; minimum degree.
\end{keyword}
\end{frontmatter}

\newtheorem{theorem}{Theorem}
\newtheorem{lemma}[theorem]{Lemma}
\newtheorem{conjecture}[theorem]{Conjecture}
\newtheorem{observation}[theorem]{Observation}
\newtheorem{claim}[theorem]{Claim}
\newtheorem{corollary}[theorem]{Corollary}
\newtheorem{proposition}[theorem]{Proposition}
\newtheorem{question}[theorem]{Question}
\numberwithin{theorem}{section}


\section{Introduction} \label{section:introduction}

The notion of cycles has a fundamental role in many notions and problems of graph theory, in both its undirected and 
directed contexts. Hence studying the behaviour of cycles in graphs and digraphs led to many interesting and appealing works and investigations.
As the literature on this topic is obviously quite wide, pointing out some particular results and directions of research would be irrelevant.
For that reason, let us just mention, at the attention of the interested reader, the survey~\cite{AG85} on cycles in undirected graphs
by Alspach and Godsil, and the survey~\cite{BT81} on directed cycles in digraphs by Bermond and Thomassen.

\medskip

In this paper, we study degree conditions guaranteeing the existence in a graph (resp. digraph)
of a certain number of vertex-disjoint cycles (resp. directed cycles) whose lengths verify particular properties.
More precisely, not only we want cycles (resp. directed cycles) being vertex-disjoint, but we also request their lengths to be different somehow.
Namely, we first ask for the lengths to be different only, but then also request additional properties on the lengths such as having 
the same remainder to some modulo.

We start in Section~\ref{section:undirectedg} by studying how the number of vertex-disjoint cycles of different lengths in an undirected 
graph behaves in front of the minimum degree of that graph. More precisely, we consider, given some $k \geq 1$, the minimum degree 
required for a graph to have at least~$k$ vertex-disjoint cycles of different lengths (and sometimes additional length properties). 
We show that this value is precisely $\frac{k^2+5k-2}{2}$ for every $k$ (Theorem~\ref{ThMinimumDegree}). 
Several more constrained situations are then considered, \textit{e.g.} when the graph is triangle-free or the vertex-disjoint cycles are requested
to be more than just of different lengths; in these situations as well, we exhibit bounds (most of which are tight) on the least minimum degree required
to guarantee the existence of the $k$ desired vertex-disjoint cycles.
We also consider the opposite direction, and conjecture that for every $D \geq 3$, every graph $G$
verifying $k+1 \leq \delta(G) \leq \Delta(G) \leq D$ and of large enough order has $k$ vertex-disjoint cycles
of different lengths (see Conjecture~\ref{ConjMaximumDegree}).
To support this conjecture, we prove it for $k=2$ (Theorem~\ref{ThConjMaximumDegree}).
This in particular yields that every cubic graph of order more than $14$ has two vertex-disjoint cycles
of different lengths, which is tight (see Theorem~\ref{ThCubicGraph}).

We then consider, in Section~\ref{section:directed}, the same question for digraphs: What minimum out-degree is required for a 
digraph to have at least~$k$ vertex-disjoint directed cycles of different lengths? 
The existence of such a minimum out-degree was conjectured by Lichiardopol in~\cite{Lic14}, who verified it for $k=2$.
We here give further support to Lichiardopol's Conjecture by showing it to hold for tournaments (see Corollary~\ref{corollary:tournaments}),
and, using the probabilistic method, for regular digraphs (Theorem~\ref{theorem:regular}) and digraphs of small order (Theorem~\ref{theorem:order}).

\section{Disjoint cycles of different lengths in undirected graphs} \label{section:undirectedg}

In this section, we consider the existence of disjoint cycles of different lengths in graphs.
We start off, in Section~\ref{section:prelim}, by recalling a few results and introducing new results and concepts of independent interest.
We then prove our main results in Section~\ref{section:undirected}. 

\subsection{Preliminaries} \label{section:prelim}

Let $G$ be a graph and $X$ a subset of $V(G)$. We use $G[X]$ to
denote the subgraph of $G$ induced by $X$, and $G-X$ to denote the
subgraph of $G$ induced by $V(G)\backslash X$. For two disjoint
subsets $X,Y$ of $V(G)$, we denote by $(X,Y)_G$ the bipartite subgraph of
$G$ with all edges between $X$ and $Y$. For a subgraph $H$ of $G$,
we set $G-H=G-V(H)$.

\subsubsection{Known results}

By considering the last vertex of a longest path of some graph, we get following.

\begin{proposition}\label{ThCDL}
For every $k\geq 1$, every graph of minimum degree at least $k+1$ contains
$k$ cycles of different lengths.
\end{proposition}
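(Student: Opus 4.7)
The hint in the excerpt already points at the right strategy, so the plan is simply to make it precise. I would take a longest path $P = v_0 v_1 \dots v_\ell$ in $G$ and focus on the endpoint $v_\ell$. Since $P$ is a longest path, every neighbor of $v_\ell$ must lie on $P$ (otherwise we could extend $P$), and of course one of these neighbors is $v_{\ell-1}$, which does not produce a cycle. Using $\delta(G) \ge k+1$, the vertex $v_\ell$ has at least $k$ neighbors of the form $v_i$ with $0 \le i \le \ell - 2$.

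For each such neighbor $v_i$, the closed walk
\[
  v_i\, v_{i+1}\, \dots\, v_\ell\, v_i
\]
is a cycle of length $\ell - i + 1 \ge 3$. Since distinct indices $i$ yield cycles of distinct lengths, the $k$ qualifying neighbors of $v_\ell$ produce $k$ cycles of pairwise different lengths, as required.

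There is no serious obstacle here: the only thing to check carefully is the off-by-one bookkeeping that forces the degree lower bound to be $k+1$ rather than $k$ (the edge $v_{\ell-1}v_\ell$ consumes one neighbor without producing a cycle). Everything else follows from the standard fact that a longest path cannot be extended at either endpoint.
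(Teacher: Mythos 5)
Your proof is correct and is exactly the argument the paper has in mind: it only sketches the proposition with the remark ``by considering the last vertex of a longest path,'' and your write-up fills in that sketch, including the right accounting for the edge $v_{\ell-1}v_\ell$ that forces the bound $k+1$.
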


By considering a maximum cut of some graph, we clearly get the following.

\begin{proposition}\label{ThBSMD}
Let $G$ be a graph of minimum degree at least $2k-1$, where $k\geq
1$. Then $V(G)$ can be partitioned into sets $S$ and $T$ such that
the bipartite subgraph $(S,T)_G$ has minimum degree at least $k$.
\end{proposition}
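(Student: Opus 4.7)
The plan is to take $(S,T)$ to be a partition of $V(G)$ that maximizes the number of edges of the bipartite subgraph $(S,T)_G$, that is, a \emph{maximum cut} of $G$. I would then verify that this partition automatically has the desired minimum-degree property, exploiting the local optimality of a maximum cut under single-vertex flips.

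More precisely, pick any vertex $v$, say with $v \in S$, and write $a$ for the number of its neighbors in $S$ and $b$ for the number of its neighbors in $T$, so that $a+b = d_G(v) \geq 2k-1$. Suppose, toward a contradiction, that $b \leq k-1$. Then $a \geq (2k-1)-(k-1) = k$, so moving $v$ from $S$ to $T$ would destroy $b$ edges of the cut but create $a \geq k > b$ new ones, strictly increasing the size of the cut and contradicting maximality. Hence $b \geq k$, i.e., $v$ has at least $k$ neighbors on the opposite side of the partition. The case $v \in T$ is symmetric.

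There is no real obstacle here: the whole argument hinges on the elementary observation that in a maximum cut, each vertex has at least as many neighbors across the partition as on its own side, which combined with $\delta(G) \geq 2k-1$ immediately forces at least $k$ neighbors across. Since $v$ was arbitrary, every vertex of the resulting bipartite graph $(S,T)_G$ has degree at least $k$, as claimed.
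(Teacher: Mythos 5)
Your proof is correct and is exactly the argument the paper intends: the paper introduces Proposition~\ref{ThBSMD} with the remark ``By considering a maximum cut of some graph, we clearly get the following,'' and your maximum-cut argument with the single-vertex flip is the standard way to make that remark precise.
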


An immediate consequence is:

\begin{theorem}\label{ThECDL}
For every $k\geq 1$, every graph of minimum degree at least $2k+1$
contains $k$ even cycles of different lengths.
\end{theorem}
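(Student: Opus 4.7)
The plan is to combine the two preceding propositions directly, which is presumably why the theorem is described as an \emph{immediate consequence}.

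First I would apply Proposition~\ref{ThBSMD} with parameter $k+1$ in place of $k$. Since the hypothesis $\delta(G) \geq 2k+1 = 2(k+1) - 1$ is exactly what that proposition requires, it yields a partition of $V(G)$ into sets $S$ and $T$ such that the bipartite subgraph $H := (S,T)_G$ has minimum degree at least $k+1$.

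Next I would apply Proposition~\ref{ThCDL} to $H$: since $\delta(H) \geq k+1$, the graph $H$ contains $k$ cycles of pairwise distinct lengths. Because $H$ is bipartite, every cycle of $H$ has even length, so these $k$ cycles are in fact $k$ even cycles of pairwise distinct lengths. As $H$ is a subgraph of $G$, the same $k$ cycles live inside $G$, finishing the argument.

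There is no real obstacle here; the only thing to verify is the arithmetic $2(k+1)-1 = 2k+1$, so that the hypothesis of Proposition~\ref{ThBSMD} is indeed met. The bipartiteness of $H$ does the rest, turning ``different lengths'' automatically into ``different even lengths''.
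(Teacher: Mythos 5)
Your proposal is correct and follows essentially the same route as the paper: apply Proposition~\ref{ThBSMD} (with parameter $k+1$, using $\delta(G)\geq 2(k+1)-1=2k+1$) to obtain a spanning bipartite subgraph of minimum degree at least $k+1$, then Proposition~\ref{ThCDL} gives $k$ cycles of different lengths in it, all even by bipartiteness. Your explicit substitution of $k+1$ into Proposition~\ref{ThBSMD} just makes precise a step the paper leaves implicit.
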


\begin{proof}
By Proposition \ref{ThBSMD}, we can partition $V(G)$ into sets $S$ and
$T$ such that the bipartite subgraph $G'=(S,T)_G$ has minimum degree at least $k+1$. By Proposition \ref{ThCDL}, graph $G'$
contains $k$ cycles of different lengths. Since $G'$ is bipartite, all these cycles have even length.
\end{proof}
\medskip

Proposition \ref{ThBSMD} shows the existence of a cut of any graph
such that every vertex has `many' neighbors in the different partite
set. In the different flavour, the following three theorems concern cuts of graphs
such that every vertex has `many' neighbors in the same partite set.

\begin{theorem}[Stiebitz \cite{Stiebitz}]\label{ThSt}
If $s$ and $t$ are non-negative integers, and $G$ is a graph with
minimum degree at least $s+t+1$, then the vertex set of $G$ can be
partitioned into two sets which induce subgraphs of minimum degree
at least $s$ and $t$, respectively.
\end{theorem}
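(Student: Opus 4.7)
The plan is to attack this via a weighted extremal argument on ordered partitions of $V(G)$. Concretely, I would consider all ordered partitions $(V_1, V_2)$ of $V(G)$ with both parts non-empty, and choose one that maximizes
\[
w(V_1, V_2) \;=\; (t+1)\,e(G[V_1]) + (s+1)\,e(G[V_2]).
\]
The weights are chosen so that the extremality translates cleanly into a lower bound on induced degrees: for any $v \in V_1$ with $|V_1| \geq 2$ (so that moving $v$ to $V_2$ keeps both parts non-empty), maximality of $w$ forces $(s+1)\deg_{V_2}(v) - (t+1)\deg_{V_1}(v) \leq 0$, i.e.\ $\deg_{V_2}(v) \leq \tfrac{t+1}{s+1} \deg_{V_1}(v)$.

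Combining this with the hypothesis $\deg_G(v) \geq s+t+1$ then gives
\[
\deg_{V_1}(v) \;\geq\; \frac{(s+1)(s+t+1)}{s+t+2} \;=\; s + \frac{t+1}{s+t+2} \;>\; s,
\]
so $\deg_{V_1}(v) \geq s$ by integrality. A symmetric calculation, exchanging the roles of $V_1$ and $V_2$ in the argument, gives $\deg_{V_2}(v) \geq t$ for every $v \in V_2$ with $|V_2| \geq 2$. This handles the bulk of the theorem.

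The main obstacle, and the step that will require real care, is the degenerate case where the $w$-maximizer has $|V_1| = 1$ or $|V_2| = 1$: the singleton vertex cannot be moved without violating non-emptiness, so the extremal bound does not apply to it. The cases $s = 0$ or $t = 0$ are easily dispatched (if $s = 0$, take $V_1 = \{v\}$ for any $v$ and $V_2 = V(G)\setminus\{v\}$, which satisfies $\delta(G[V_2]) \geq \delta(G) - 1 \geq t$). For $s, t \geq 1$, I would rule out degenerate maximizers by a separate argument, for instance by an induction on $|V(G)|$ or by exploiting the fact that a singleton vertex $v$ in $V_1$ has at least $s+t+1$ neighbors in $V_2$, enough structure to swap $v$ with a suitably chosen vertex of $V_2$ and produce a non-degenerate partition of equal or greater $w$. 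Packaging this degenerate-case analysis cleanly is what I expect to be the hardest part of the proof.
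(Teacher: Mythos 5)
The paper does not prove this statement at all: it is quoted as a known theorem of Stiebitz \cite{Stiebitz}, so your attempt has to be judged against that original (and genuinely non-trivial) proof rather than anything in this paper. Your proposal is essentially the classical naive local-switching argument, and it has a real gap exactly where you locate the ``hardest part''. The computation in the non-degenerate case is correct, but notice that it proves too much: for a movable $v\in V_1$ you get $\deg_{V_1}(v)\geq\frac{(s+1)(s+t+1)}{s+t+2}>s$, hence $\deg_{V_1}(v)\geq s+1$, and symmetrically $\deg_{V_2}(v)\geq t+1$. So if a maximizer with $|V_1|,|V_2|\geq 2$ existed you would obtain a partition into parts of minimum degree $s+1$ and $t+1$ from the hypothesis $\delta(G)\geq s+t+1$ alone. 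That statement is false: in $G=K_{s+t+2}$ (minimum degree exactly $s+t+1$) such parts would need at least $s+2$ and $t+2$ vertices, i.e.\ $s+t+4$ vertices in total. Consequently, in such tight instances \emph{every} maximizer of your functional $w$ is degenerate, so the degenerate case is not a boundary nuisance but the place where the entire content of the theorem sits.

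Your two suggested fixes do not close this gap. The swap idea --- replace a degenerate maximizer by a non-degenerate partition of equal or greater $w$ --- is impossible in the example above: for $K_{s+t+2}$ with, say, $s\geq t$, a degenerate partition has weight $(s+1)\binom{s+t+1}{2}$, while any partition with $2\leq|V_1|\leq s+t$ has weight at most $(s+1)\left(\binom{a}{2}+\binom{s+t+2-a}{2}\right)$, which is strictly smaller; yet Stiebitz's conclusion does hold there, via the partition $K_{s+1}\cup K_{t+1}$, a partition your weight function actively disfavors. The alternative ``induction on $|V(G)|$'' is not an argument as stated (deleting vertices destroys the degree hypothesis, and no induction step is described). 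So the proposal, as written, proves the theorem only under an extra assumption (existence of a non-degenerate maximizer) that fails precisely for extremal graphs; completing it would require a genuinely different choice of extremal configuration and a finer analysis, which is what Stiebitz's actual proof supplies.
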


\begin{theorem}[Kaneko \cite{Kaneko}]\label{ThKa}
Let $s$ and $t$ be integers with $s\geq 1$ and $t\geq 1$. Then for
every triangle-free graph $G$ of minimum degree at least $s+t$,
there exists a partition $(S,T)$ of $V(G)$ such that the induced
subgraph $G\lbrack S\rbrack$ is of minimum degree at least $s$, and the
induced subgraph $G\lbrack T\rbrack$ is of minimum degree at least $t$.
\end{theorem}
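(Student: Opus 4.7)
The natural strategy is an extremal (potential-function) argument, combined with a careful use of the triangle-free assumption. For a partition $(S,T)$ of $V(G)$ with $|S|,|T|\geq 1$, I would work with the total \emph{deficiency}
$$\Phi(S,T) \;=\; \sum_{v\in S}\max\bigl(0,\, s - d_S(v)\bigr) \;+\; \sum_{v\in T}\max\bigl(0,\, t - d_T(v)\bigr),$$
so the conclusion is equivalent to producing a partition with $\Phi=0$. A suitable starting candidate exists (for instance $S=V(G)\setminus\{u_0\}$ and $T=\{u_0\}$ for any $u_0$, which gives $\Phi\leq t$), so $\Phi$ admits a minimizer $(S,T)$ over nonempty-part partitions.

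Assume for contradiction $\Phi(S,T)>0$, and by symmetry fix $v\in T$ with $d_T(v)\leq t-1$; the hypothesis $\delta(G)\geq s+t$ then forces $d_S(v)\geq s+1$. Moving $v$ from $T$ to $S$ wipes out $v$'s own $T$-deficiency $t-d_T(v)$, can only \emph{improve} the $S$-deficiency of each $w\in N(v)\cap S$, and can increase by at most one the $T$-deficiency of each $w\in N(v)\cap T$. Summing these contributions gives
$$\Delta\Phi \;\leq\; -(t-d_T(v)) \;+\; \bigl|\{w\in N(v)\cap T : d_T(w)\leq t\}\bigr|,$$
so in the easy case where few $T$-neighbours of $v$ sit at the threshold $d_T(w)=t$ we already get $\Delta\Phi<0$ and contradict minimality.

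The delicate case is when $v$ has many $T$-neighbours $w$ with $d_T(w)=t$ exactly. This is where triangle-freeness enters decisively: $N(v)$ is independent in $G$, so these at-risk vertices are pairwise non-adjacent in $T$. Consequently, once $v$ has been moved to $S$, each freshly-deficient $w$ satisfies $d_S(w)\geq s+1$ in the new configuration and can itself be moved to $S$ without re-creating a deficiency for any of its independent siblings. In the boundary case $|T|=1$, where the naive move would leave $T$ empty, one instead performs a \emph{swap}: send $v$ to $S$ and some $u\in N(v)\cap S$ to $T$. Triangle-freeness then gives $N(u)\cap N(v)=\emptyset$, so removing $u$ from $S$ does not decrease the $S$-degree of any $S$-neighbour of $v$, and the swap can be shown to strictly decrease $\Phi$.

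The main obstacle is the careful bookkeeping of this cascade of moves: one must verify that across the whole chain the \emph{net} change in $\Phi$ is strictly negative rather than merely finite. Triangle-freeness is precisely what prevents the cascade from looping back on itself, and is the structural reason why $\delta(G)\geq s+t$ suffices in Theorem~\ref{ThKa}, in contrast with the strictly larger bound $\delta(G)\geq s+t+1$ needed in the general case of Theorem~\ref{ThSt}.
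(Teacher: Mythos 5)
Note first that the paper does not prove this statement: Theorem~\ref{ThKa} is Kaneko's theorem, quoted from \cite{Kaneko} and used as a black box, so there is no internal proof to compare yours against; your attempt has to stand on its own, and as written it does not. The plan (minimize the deficiency $\Phi$ over partitions with nonempty parts and derive a contradiction from a local move) is sound up to your displayed inequality, but the two places where the actual difficulty lives are exactly the places you leave unproven. In the ``delicate case'' you move a freshly deficient $w\in N(v)\cap T$ into $S$; the independence of $N(v)$ only guarantees that this does not harm the \emph{other} vertices of $N(v)\cap T$, but it says nothing about the vertices of $N(w)\cap T\setminus N(v)$, each of which loses a $T$-neighbour. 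A single such move erases a deficiency of $1$ and can create up to $t-1$ new unit deficiencies, so the cascade can grow from one generation to the next; you exhibit no quantity that decreases along the chain, and nothing rules out that the net change in $\Phi$ is positive. Saying that ``the main obstacle is the careful bookkeeping'' is accurate, but that bookkeeping \emph{is} the theorem: with $\delta(G)\geq s+t$ there is no slack of the kind present in Theorem~\ref{ThSt} (where the hypothesis is $s+t+1$), and the naive single-vertex or cascading moves genuinely can fail to decrease $\Phi$.

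The boundary swap is also not established. If $T=\{v\}$ and you exchange $v$ with some $u\in N(v)\cap S$, then $u$ lands in $T$ with deficiency exactly $t$, the same contribution $v$ had, so the $T$-side does not improve; meanwhile removing $u$ from $S$ can push every vertex of $N(u)\cap S$ that had $S$-degree exactly $s$ into deficiency. Triangle-freeness gives $N(u)\cap N(v)=\emptyset$, but here this works against you: the vertices helped by $v$'s arrival in $S$ and those hurt by $u$'s departure are disjoint, so no cancellation is guaranteed and $\Delta\Phi\geq 0$ is entirely possible. In short, your write-up is a reasonable opening sketch, but the step that separates the triangle-free bound $s+t$ from the general bound $s+t+1$ is precisely the step that is missing; for this statement you should rely on (or reconstruct) Kaneko's actual argument in \cite{Kaneko} rather than on this deficiency cascade.
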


\begin{theorem}[Diwan \cite{Diwan}]\label{ThDi}
Let $s$ and $t$ be integers with $s\geq 2$ and $t\geq 2$. Then for
every  graph $G$ of girth at least $5$ and of minimum degree at
least $s+t-1$, there exists a partition $(S,T)$ of $V(G)$ such that
the induced subgraph $G\lbrack S\rbrack$ is of minimum degree at least
$s$, and the induced subgraph $G\lbrack T\rbrack$ is of minimum degree
at least $t$.
\end{theorem}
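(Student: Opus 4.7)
The strategy mirrors the proofs of Theorems~\ref{ThSt} and~\ref{ThKa}: optimize a suitable parameter over all partitions, then use a local exchange to force the degree conditions. I would consider all partitions $(S,T)$ of $V(G)$ with $|S|,|T|\geq 1$ and $\delta(G[S])\geq s$, and pick one maximizing $|S|$. Such a partition exists: for any $u\in V(G)$ the pair $S=V(G)\setminus\{u\}$, $T=\{u\}$ satisfies $\delta(G[S])\geq \delta(G)-1\geq s+t-2\geq s$, since $t\geq 2$.

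At such a maximum, assuming $|T|\geq 2$, every $v\in T$ must satisfy $d_S(v)\leq s-1$: otherwise moving $v$ to $S$ would preserve $\delta(G[S])\geq s$ (the existing $S$-vertices only gain a neighbor, and $v$ itself brings in $d_S(v)\geq s$ neighbors of $S$), leaving $T$ nonempty since $|T|\geq 2$ and strictly enlarging $|S|$, which contradicts maximality. Combined with the hypothesis $d(v)\geq s+t-1$, this gives $d_T(v)\geq t$ for every $v\in T$, so $\delta(G[T])\geq t$, and $(S,T)$ is the partition we want.

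The obstacle is the boundary case $|T|=1$, $T=\{v\}$: the move of $v$ into $S$ would empty $T$, so the maximality argument fails, and we only know $d_T(v)=0<t$. This is where the girth-$5$ hypothesis must be invoked. The conditions $\mathrm{girth}(G)\geq 5$ force $N(v)$ to be independent (no triangle) and any two distinct vertices of $G$ to share at most one common neighbor (no $4$-cycle), and I would use these to build a valid partition directly. Picking $u\in N(v)$ and setting $S'=S\setminus\{u\}$, $T'=\{v,u\}$, the set $S'$ still has $\delta(G[S'])\geq s$: any $w\in N(u)\cap S$ must lie outside $N(v)$ (otherwise $v,u,w$ would form a triangle), so $w$ has $d_S(w)=d(w)\geq s+t-1$ and therefore $d_{S'}(w)\geq s+t-2\geq s$. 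Iterating this move and enriching $T$ with further vertices, controlled by the ``at most one common neighbor'' property, one should eventually produce a partition with $\delta(G[T])\geq t$ while preserving $\delta(G[S])\geq s$. I expect this iterative boundary analysis to be the main technical burden: each step must raise some $T$-vertex's internal degree toward $t$ without letting any $S$-vertex drop below $s$, and it is precisely the combination of the tight bound $s+t-1$ with the girth restriction that makes the bookkeeping close.
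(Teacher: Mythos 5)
First, note that the paper does not prove this statement: Theorem~\ref{ThDi} is quoted from Diwan~\cite{Diwan} as a known result, so your proposal can only be judged on its own merits, and as written it has a genuine gap. The central problem is that your extremal choice is vacuous. You maximize $|S|$ over all partitions $(S,T)$ with $|S|,|T|\geq 1$ and $\delta(G[S])\geq s$, and you yourself observe that for \emph{every} vertex $u$ the partition $S=V(G)\setminus\{u\}$, $T=\{u\}$ is admissible. Hence the maximum value of $|S|$ is always $n-1$, the maximizing partition always has $|T|=1$, and the case $|T|\geq 2$ -- the only case your exchange argument handles -- never occurs. The entire theorem is therefore pushed into what you call the ``boundary case,'' which you only sketch and explicitly leave unfinished. (Restricting the maximization to partitions with $|T|\geq 2$ does not repair this: the exchange then breaks exactly when $|T|=2$, since moving a vertex out of $T$ violates the constraint.)

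Moreover, the boundary case is not a technical afterthought; it is the whole content of the theorem. Your exchange argument for $|T|\geq 2$ uses the girth hypothesis nowhere, yet the statement is false without it: $K_{s+t}$ (girth $3$) and $K_{s+t-1,s+t-1}$ (girth $4$) both have minimum degree $s+t-1$ and admit no partition into parts of minimum degrees $s$ and $t$. So any correct proof must use girth at least $5$ structurally, not merely as a patch when $T$ is a single vertex. Your sketched repair only reaches $T'=\{v,u\}$ with internal degree $1<t$, and the proposed continuation (``iterating this move and enriching $T$,'' controlled by the no-triangle and no-$C_4$ properties) is exactly the difficult bookkeeping: each further transfer removes vertices from $S$ and can drive $S$-degrees below $s$, and you provide no invariant, no termination argument, and no proof that the process ends with $\delta(G[T])\geq t$. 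Diwan's actual argument proceeds quite differently, via a carefully chosen extremal partition (minimizing a weighted count of edges inside the parts) whose analysis invokes the girth bound throughout; a ``maximize $|S|$, then locally repair'' scheme is not known to work, and your proposal does not make it work.
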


Diwan proved in \cite{Diwan'} the following result:

\begin{theorem}[Diwan \cite{Diwan'}]\label{ThDi'}
For every $r\geq 2$, and for any natural number $m$, every graph of
minimum degree at least $2r-1$ contains a cycle of length $2m$
modulo $r$.
\end{theorem}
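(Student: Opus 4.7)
The plan is to combine the classical longest-path method with an additive argument modulo $r$. I would first pass to a subgraph $H\subseteq G$ that is edge-minimal with respect to $\delta(H)\geq 2r-1$; minimality guarantees that $H$ is $2$-edge-connected and inherits the degree bound, which is all that is needed to build cycles freely. Then fix a longest path $P=v_0v_1\cdots v_\ell$ of $H$. Since $P$ is longest, every neighbour of $v_0$ lies on $P$; let the neighbours be at positions $1=i_1<i_2<\cdots<i_d$, with $d=d_H(v_0)\geq 2r-1$.

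This setup supplies two natural families of cycles through $v_0$: the \emph{initial} cycles $C_j=v_0v_1\cdots v_{i_j}v_0$ of length $i_j+1$, for $1\leq j\leq d$; and the \emph{chord} cycles $C_{j,j'}=v_0v_{i_j}v_{i_j+1}\cdots v_{i_{j'}}v_0$ of length $(i_{j'}-i_j)+2$, for $1\leq j<j'\leq d$. Thus, to produce a cycle of length $\equiv 2m\pmod r$, it suffices to exhibit either an index $j$ with $i_j\equiv 2m-1\pmod r$, or a pair $j<j'$ with $i_{j'}-i_j\equiv 2m-2\pmod r$. The problem reduces to an additive question about a set of size at least $2r-1$ inside $\mathbb{Z}/r\mathbb{Z}$.

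The technical heart -- and main obstacle -- is precisely this last additive step: forcing either the residue $2m-1$ into the set $\{i_j\bmod r\}$, or the residue $2m-2$ into its difference set. A naive pigeonhole only produces a single repeated residue, which handles $m=1$ (giving a cycle of length $\equiv 2\pmod r$) but nothing more. To enrich the picture, I would apply the P\'osa rotation at $v_0$: each chord $v_0v_{i_j}$ yields a new longest path with fresh endpoint $v_{i_j-1}$, and rerunning the neighbour analysis there produces further index-sets whose residues can be controlled. Combining this rotation-generated pool of cycle lengths with a Cauchy--Davenport-type bound on difference sets in $\mathbb{Z}/r\mathbb{Z}$ should then force every residue of the form $2m$ to appear among the cycle lengths of $G$, completing the proof.
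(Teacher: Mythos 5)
This statement is not proved in the paper at all: it is Diwan's theorem, quoted verbatim from the cited reference [Diwan'], so there is no internal proof to compare with, and your sketch must stand on its own. It does not. The first two steps are fine (the initial cycles $C_j$ of length $i_j+1$ and the chord cycles $C_{j,j'}$ of length $i_{j'}-i_j+2$ do exist), but they only reduce the theorem to the additive claim that the set $\{i_1,\dots,i_d\}$, or a rotation-enlarged family of such sets, must realise the residue $2m-1$ or have $2m-2$ in its difference set modulo $r$ --- and that claim is exactly what you leave unproved. It is not a routine gap: the $d\geq 2r-1$ neighbour positions may all lie in a single residue class modulo $r$ (and in natural examples they do), so the initial cycles give one uncontrolled residue and the chord cycles only give lengths $\equiv 2 \pmod r$; no Cauchy--Davenport-type statement applies, both because you have a single possibly-constant multiset rather than a sumset of two large sets, and because Cauchy--Davenport needs $r$ prime, while for composite $r$ the subgroup obstructions of Kneser's theorem correspond precisely to the structured (e.g.\ bipartite or periodic) graphs that make Diwan's theorem delicate. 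Indeed, your concluding sentence claims the mechanism would ``force every residue of the form $2m$ to appear'', with no use of the evenness of the target residue; but the restriction to even residues is essential (e.g.\ $K_{2r-1,2r-1}$ has minimum degree $2r-1$ and only even cycles), so any argument that does not see where parity enters cannot be correct as stated. The P\'osa rotation step is only a hope: you give no mechanism for controlling the residues of the new endpoints' neighbour positions, and this is precisely the technical heart of the known proofs (Diwan's argument, and Thomassen's earlier weaker bounds, proceed by a careful structural induction, not by longest-path rotations plus additive combinatorics).

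Two smaller points. First, edge-minimality subject to $\delta(H)\geq 2r-1$ does not make $H$ $2$-edge-connected, nor even connected (a disjoint union of copies of $K_{2r}$ is edge-minimal); if you need $2$-connectivity for rotations you must pass to a suitable block and re-check the degree hypothesis there, which is itself not automatic. Second, pigeonhole on the $i_j$ does not even ``handle $m=1$'' the way you say unless you note that a repeated residue among the $i_j$ yields a chord cycle of length $\equiv 2\pmod r$; that part is fine, but it is the only residue your construction actually guarantees.
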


It is easy to seen that when $r$ is odd, for any natural number $m$,
every graph of minimum degree at least $2r-1$ contains a cycle of
length $m$ modulo $r$.

\begin{corollary}\label{CoThDi'}
For every $k\geq 1$ and $r\geq 2$, every graph $G$ with
$$\delta(G)\geq\left\{\begin{array}{ll}
2(k+1)r-1,  & \mbox{if } k \mbox{ is even and } r \mbox{ is odd};\\
2kr-1,      & \mbox{otherwise},
\end{array}\right.$$
has $k$ cycles of different lengths all divisible by $r$.
\end{corollary}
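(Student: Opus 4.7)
The plan is to apply Theorem \ref{ThDi'} directly, but with a carefully chosen modulus $r'$ that is a small multiple of $r$, rather than with the original modulus $r$. The point of inflating the modulus is to single out $k$ pairwise distinct residue classes modulo $r'$ that are all themselves multiples of $r$: cycles realising those distinct residues will then automatically have pairwise distinct lengths, each divisible by $r$.

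Concretely, I would set $r'=kr$ in the ``otherwise'' case and $r'=(k+1)r$ in the special case ($k$ even, $r$ odd). In both situations the hypothesis on $\delta(G)$ rewrites as $\delta(G)\ge 2r'-1$, which is exactly the minimum-degree bound of Theorem \ref{ThDi'} applied with modulus $r'$. As the $k$ target residues I would take $0,r,2r,\dots,(k-1)r$ modulo $r'$; these are pairwise distinct since they all lie strictly below $kr\le r'$, and each is divisible by $r$, so any $k$ cycles whose lengths realise these residues will do the job.

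The only verification left is that each of these residues is actually accessible to Theorem \ref{ThDi'}. When $r$ is even, every $ir$ is even and can be written as $2m_i \bmod r'$, so Theorem \ref{ThDi'} produces the desired cycle directly. When $r$ is odd, my choice of $r'$ makes $r'$ odd too (either $r'=kr$ with $k$ odd in the ``otherwise'' case, or $r'=(k+1)r$ with $k+1$ odd in the special case), and the remark following Theorem \ref{ThDi'} then supplies a cycle of any prescribed residue modulo $r'$. The genuine conceptual step is precisely the choice of $r'$: one is forced to inflate the modulus from $kr$ to $(k+1)r$ exactly when $k$ is even and $r$ is odd, because then $kr$ is even while the target residues $r,3r,\dots,(k-1)r$ are odd and thus out of reach of Diwan's theorem modulo $kr$. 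I do not foresee any other obstacle.
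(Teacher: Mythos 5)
Your proof is correct and follows essentially the same route as the paper: apply Theorem~\ref{ThDi'} (and its odd-modulus remark) with the inflated modulus $kr$, respectively $(k+1)r$ when $k$ is even and $r$ is odd, and extract $k$ cycles realising distinct residues that are multiples of $r$. The only cosmetic difference is that in the odd-odd case the paper takes the residues $2ir \bmod kr$ directly from the theorem and argues distinctness from $kr$ being odd, while you invoke the remark to hit $ir$ itself; in the $k$ even, $r$ odd case the paper reduces to the statement for $k+1$ rather than selecting $k$ residues modulo $(k+1)r$ as you do.
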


\begin{proof}
If $r$ is even, then $\delta(G)\geq 2kr-1$. By Theorem \ref{ThDi'},
there exist $k$ cycles $C_0,C_1,\ldots,C_{k-1}$ such that for each
$i$, cycle $C_i$ has length $ir$ (mod $kr$). Clearly the $k$ cycles have
different lengths all divisible by $r$.

If both $k$ and $r$ are odd, then $\delta(G)\geq 2kr-1$. By Theorem
\ref{ThDi'}, there exist $k$ cycles $C_0,C_1,\ldots,C_{k-1}$ such
that for each $i$, cycle $C_i$ has length $2ir$ (mod $kr$). Since $kr$ is
odd, the $k$ cycles have different lengths all divisible by $r$.

If $k$ is even and $r$ is odd, then $\delta(G)\geq 2(k+1)r-1$. Note
that $k+1$ is odd. By the analysis above, graph $G$ contains $k+1$ cycles
of different lengths all divisible by $r$.
\end{proof}


\subsubsection{Graphs with only one type of cycles}

In the upcoming results, we use $n(G)$ to denote the order of some graph $G$. For integer
$i$, we denote by $N_i(G)$ the set of vertices of $G$ with degree
$i$, and set $n_i(G)=|N_i(G)|$. 
Using these notions, we prove the following two results concerning graphs with only one particular type of cycles.
These results will mainly be used in the proof of Theorem~\ref{ThConjMaximumDegree}.

\begin{lemma}\label{LeCycleTriangle}
Let $G$ be a graph with $\delta(G)\geq 2$. If all cycles of $G$ have length~$3$, then $n_2(G)\geq \dfrac{n(G)}{3}+2$.
\end{lemma}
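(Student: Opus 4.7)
The plan is to exploit the block-cut structure of $G$ to derive two lower bounds on $n_2(G)$ whose weighted average yields the claimed inequality. Since every cycle of $G$ has length~$3$, every $2$-connected block of $G$ must have at most $3$ vertices (a $2$-connected graph on $\geq 4$ vertices always contains a cycle of length $\geq 4$, as can be seen by attaching an open ear to a triangle), so each block is either a bridge ($K_2$) or a triangle ($K_3$). The hypothesis $\delta(G) \geq 2$ forces both endpoints of every bridge to be cut vertices of $G$, whence every non-cut vertex of $G$ lies in a unique triangle block and has degree exactly~$2$. I will assume $G$ is connected (otherwise, apply the lemma to each component), and dispatch the case when $G$ is a single triangle by hand.

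Let $T$, $B$, and $C$ denote respectively the numbers of triangle blocks, bridge blocks, and cut vertices of $G$, and let $\mathcal{T}$ be the block-cut tree. Counting vertex-block incidences two ways, using that $\mathcal{T}$ is a tree on $T+B+C$ nodes whose edges are cut-vertex/block incidences, yields the identity $2T + B = n(G) - 1$. Summing $\deg(v) - 2$ over $V(G)$ gives $\sum_v(\deg(v)-2) = 2|E(G)| - 2n(G) = (6T+2B) - 2(2T+B+1) = 2T - 2$; since every vertex of degree at least~$3$ contributes at least $1$ to that sum, we obtain the first bound
$$n_2(G) \;\geq\; n(G) - (2T-2) \;=\; B + 3.$$
For the second, finer bound I would classify each triangle block by the number of cut vertices it contains: let $L$, $M$, $N$ denote the numbers of triangles containing respectively $1$, $2$, $3$ cut vertices, so $T = L + M + N$. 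Applying the tree identity $\sum_{x \in V(\mathcal{T})}(\deg_{\mathcal{T}}(x) - 2) = -2$, and using that each bridge has $\deg_{\mathcal{T}} = 2$ and each cut vertex has $\deg_{\mathcal{T}} \geq 2$, forces the triangle contributions to satisfy $-L + N \leq -2$, i.e.\ $L \geq N+2$. Since every degree-$2$ non-cut vertex lies in a triangle, and each such triangle of type $L$ (resp.\ $M$) contributes $2$ (resp.\ $1$) such vertices,
$$n_2(G) \;\geq\; 2L + M \;\geq\; L + M + N + 2 \;=\; T + 2.$$

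Combining the two inequalities via the convex combination $\tfrac{2}{3}(T+2) + \tfrac{1}{3}(B+3) = \tfrac{2T+B+7}{3} = \tfrac{n(G)}{3} + 2$, and using $2T + B = n(G) - 1$, one concludes $n_2(G) \geq \tfrac{n(G)}{3} + 2$, as required. I expect the main obstacle to be the sharper bound $n_2(G) \geq T + 2$: the degree-sum estimate alone only yields $n_2(G) \geq B + 3$, which is insufficient whenever triangle blocks dominate bridge blocks (for instance in a friendship-type configuration of many triangles sharing a single vertex), so the tree-structural inequality $L \geq N + 2$ is the crucial ingredient that handles these extremal cases.
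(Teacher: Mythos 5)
Your proof is correct, and it takes a genuinely different route from the paper's. Both arguments start from the same structural observation --- every block of $G$ is a $K_2$ or a triangle, and $\delta(G)\geq 2$ forces both endpoints of every bridge to be cut vertices, so every non-cut vertex has degree exactly $2$ --- but the paper then proceeds by induction on the number of blocks, peeling off an end-block (necessarily a triangle) and, in one case, an attached pendant path of degree-$2$ vertices, and verifying that the inequality survives the deletion. You instead argue globally on the block--cut tree: the incidence count $2T+B=n(G)-1$ together with $\sum_v(\deg(v)-2)=2T-2$ gives $n_2(G)\geq B+3$; the tree identity $\sum_x(\deg_{\mathcal{T}}(x)-2)=-2$, with bridges contributing $0$ and cut vertices contributing $\geq 0$, gives $L\geq N+2$ and hence $n_2(G)\geq 2L+M\geq T+2$; and the convex combination $\tfrac{2}{3}(T+2)+\tfrac{1}{3}(B+3)=\tfrac{2T+B+7}{3}=\tfrac{n(G)}{3}+2$ finishes. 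I checked the two counting identities and the classification $T=L+M+N$ (valid because you dispose of the one-block case separately, so every triangle block meets a cut vertex), and the bookkeeping is sound; note also that disposing of disconnected $G$ componentwise is fine since each component satisfies the hypotheses and the $+2$ only improves under summation. What your approach buys is a non-inductive, one-shot argument that makes the extremal structure visible --- both of your bounds are simultaneously tight, e.g.\ for two triangles joined by a bridge, which matches the tightness of the lemma --- whereas the paper's induction requires a case analysis on how the end-block attaches but needs no global tree identities. Either proof would serve; yours is arguably the more transparent of the two.
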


\begin{proof}
We first claim that every block (maximal $2$-connected subgraph) of $G$ is either a $K_2$ or a
triangle. Let $B$ be an arbitrary block of $G$. If $B\neq K_2$, then
$B$ is 2-connected. If $B$ has at least four vertices, then, by
Dirac's Theorem, which states that a block with minimum degree~$k$ has a cycle of length at least~$2k$, 
block $B$ contains a cycle of order at least 4, which is
not in our assumption. So we conclude that $B$ has exactly three
vertices. Thus $B$ is a triangle.

Now we prove the lemma by induction on the number of blocks of $G$. If
$G$ has only one block, then $G$ is a triangle and the assertion is
trivial. If $G$ is disconnected, then we can complete the proof by
applying the induction hypothesis to each component of $G$. Now we
assume that $G$ is connected and separable.

Let $B$ be an end-block of $G$, and $x_0$ be the cut-vertex of $G$
contained in $B$. Recall that $B$ is a triangle. If
$d_{G-B}(x_0)\geq 2$, then let $G'=G-(B-x_0)$. By the induction
hypothesis, we have $n_2(G')\geq \dfrac{n(G')}{3}+2$. Note that
$n(G)=n(G')+2$ and $n_2(G)\geq n_2(G')+1$. We then have $$n_2(G)\geq
n_2(G')+1\geq\frac{n(G')}{3}+2+1=\frac{n(G)-2}{3}+3\geq\frac{n(G)}{3}+2.$$
Now we assume that $d_{G-B}(x_0)=1$. Let $P=(x_0,x_1,\cdots,x_t)$ be
a path of $G$ such that $d(x_i)=2$ for $1\leq i\leq t-1$ and
$d(x_t)\geq 3$. Let $G'=G-B-(P-x_t)$. By the induction hypothesis, we have
$n_2(G')\geq \dfrac{n(G')}{3}+2$. Note that $n(G)=n(G')+t+2$ and
$n_2(G)\geq n_2(G')+t$. Since $t\geq 1$, we now have $$n_2(G)\geq
n_2(G')+t\geq\frac{n(G')}{3}+2+t=\frac{n(G)-t-2}{3}+t+2\geq
\frac{n(G)}{3}+2,$$ which concludes the proof.
\end{proof}
\medskip

\begin{lemma}\label{LeCycleQuadrangle}
Let $G$ be a graph with $\delta(G)\geq 2$. If all cycles of $G$ have length~$4$, then $n_2(G)\geq \dfrac{n(G)}{5}+2$.
\end{lemma}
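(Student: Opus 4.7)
The proof will closely follow the template of Lemma~\ref{LeCycleTriangle}. The key preliminary step is a structural description of the $2$-connected pieces of $G$: every $2$-connected graph all of whose cycles have length $4$ is either $K_2$ or a complete bipartite graph $K_{2,m}$ with $m \geq 2$. Indeed, by Dirac's Theorem any such block $B$ has $\delta(B) \leq 2$, and $B$ is bipartite since all of its cycles are even. Starting from any $4$-cycle of $B$ and adding ears one at a time (as permitted by the ear decomposition of $2$-connected graphs), a short case analysis shows that each ear must have length exactly $2$ and join two diagonally opposite vertices, since otherwise a cycle of length $\neq 4$ would appear; iterating, this produces exactly the graphs $K_{2,m}$. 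Because $\delta(G) \geq 2$, no end-block of $G$ can be $K_2$ (it would contribute a leaf to $G$), so every end-block of $G$ is some $K_{2,m}$ with $m \geq 2$.

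The argument then proceeds by induction on the number of blocks of $G$. In the base case $G = K_{2,m}$: for $m = 2$ all four vertices have degree $2$, so the inequality holds trivially; for $m \geq 3$ one has $n_2(G) = m$ and $n(G) = m+2$, so $n_2(G) \geq n(G)/5 + 2$ reduces to $m \geq 3$, tight precisely at $K_{2,3}$ as expected. The disconnected case follows by summing the inequality over components. For the inductive step I pick an end-block $B = K_{2,m}$ with cut-vertex $x_0$ and distinguish sub-cases according to (a) on which side of $B$ the vertex $x_0$ lies, and (b) whether $d_{G-B}(x_0) \geq 2$ or $d_{G-B}(x_0) = 1$. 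In the former case I set $G' = G - (B - x_0)$ and apply the induction hypothesis to $G'$, whose cycles still all have length $4$ and whose minimum degree is still at least $2$. In the latter case, exactly as in Lemma~\ref{LeCycleTriangle}, I follow the maximal path $P = (x_0, x_1, \ldots, x_t)$ of consecutive degree-$2$ vertices emanating from $x_0$, stop at a vertex $x_t$ with $d_G(x_t) \geq 3$, and set $G' = G - B - (P - x_t)$. The induction then closes by comparing $n(G) - n(G')$ with the count of degree-$2$ vertices of $G$ removed from $n_2(G)$ when passing to $G'$, with an adjustment of $-1$ for the possibility that a vertex of degree at least $3$ in $G$ (namely $x_0$ or $x_t$) becomes a degree-$2$ vertex of $G'$.

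The main obstacle will be the arithmetic bookkeeping: unlike the uniform triangle blocks of Lemma~\ref{LeCycleTriangle}, the blocks $K_{2,m}$ vary in size, and both the number of vertices removed and the number of degree-$2$ vertices among them depend on $m$ and on which side of $B$ contains $x_0$. In each of the four sub-cases (two for the position of $x_0$ in $B$, two for whether a path needs to be peeled off), one must verify that the number of gained degree-$2$ vertices, decreased by $1$ to account for a possibly new degree-$2$ vertex in $G'$, is at least $(n(G) - n(G'))/5$. The tightest arithmetic will occur at $m = 3$ with $x_0$ in the size-$2$ side of $B$ (and, in the path-chaining case, at $m + t = 4$), perfectly matching the extremality of $K_{2,3}$.
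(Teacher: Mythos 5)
Your proposal is correct and takes essentially the same route as the paper: the same structural characterization of the blocks as $K_2$ or $K_{2,m}$ (yours via ear decomposition, the paper's via a direct path/2-connectivity argument), followed by the same induction on the number of blocks with the identical two sub-cases ($d_{G-B}(x_0)\geq 2$ versus peeling off a path of degree-$2$ vertices) and the same degree-$2$ bookkeeping with the $-1$ adjustment. One minor slip in a side remark: the tight configuration has $x_0$ on the size-$m$ side of $K_{2,m}$ (so that only $m-1$, resp. $m+t-2$, degree-$2$ vertices are gained), not the size-$2$ side, with equality exactly at $m=3$, $t=1$; this does not affect your argument since you verify all sub-cases anyway.
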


\begin{proof}
We first claim that every block of $G$ is either a $K_2$ or a
complete bipartite graph $K_{2,s}$ with $s\geq 2$. Let $B$ be an
arbitrary block of $G$. If $B\neq K_2$, then $B$ is 2-connected.
Thus $B$ contains a cycle, which is a $C_4$ by our assumption. If
$B$ is not bipartite, then $B$ has an odd cycle, which is not a
$C_4$, a contradiction. This implies that $B$ is bipartite. Let
$X,Y$ be the two partite sets of $B$ and let
$C=(x_1,y_1,x_2,y_2,x_1)$ be a $C_4$ of $G$, where $x_1,x_2\in X$
and $y_1,y_2\in Y$. Suppose that $X$ has a third vertex $x_3$. Since
$B$ is 2-connected, there is a path $P$ with two end-vertices in $C$
such that $x_3\in V(P)$ and all internal vertices of $P$ are not in
$C$. If $P$ has length more than 2, then $B$ contains a cycle longer
than $C$, which is not a $C_4$, a contradiction. Thus $P$ has length
2 and $x_3y_1,x_3y_2\in E(B)$. If $Y$ has a third vertex $y_3$, then
by a similar analysis as above, we can see that $x_1y_3,x_2y_3\in
E(G)$. But in this case $C'=(x_1,y_3,x_2,y_2,x_3,y_1,x_1)$ is a
cycle being not a $C_4$, a contradiction. This implies that either
$|X|=2$ or $|Y|=2$. We suppose without loss of generality that
$|X|=2$. Since $B$ is 2-connected, vertices $x_1,x_2$ are adjacent to every
vertex in $Y$. Thus $B$ is a complete bipartite graph $K_{2,s}$ with
$s\geq 2$.

Now we prove the lemma by induction on the block number of $G$. If
$G$ has only one block, then $G=K_{2,s}$ with $s\geq 2$ and the
assertion is trivial. If $G$ is disconnected, then we can complete
the proof by applying the induction hypothesis to each component of
$G$. Now we assume that $G$ is connected and separable.

Let $B$ be an end-block of $G$, and $x_0$ be the cut-vertex of $G$
contained in $B$. Note that $B=K_{2,s}$ with $s\geq 2$. If
$d_{G-B}(x)\geq 2$, then let $G'=G-(B-x)$. By the induction hypothesis,
$n_2(G')\geq \dfrac{n(G')}{5}+2$. Note that $n(G)=n(G')+s+1$ and
$$n_2(G)\geq\left\{\begin{array}{ll}
n_2(G')+2,  & \mbox{if } s=2;\\
n_2(G')+s-2,    & \mbox{if } s\geq 3.
\end{array}\right.$$ One can compute that $n_2(G)\geq \dfrac{n(G)}{5}+2$. Now we
assume that $d_{G-B}(x)=1$. Let $P=(x_0,x_1,\cdots, x_t)$ be a path
of $G-(B-x_0)$ such that $d(x_i)=2$ for $1\leq i\leq t-1$ and
$d(x_t)\geq 3$. Let $G'=G-B-(P-x_t)$. By the induction hypothesis,
we have $n_2(G')\geq \dfrac{n(G')}{5}+2$. Note that $n(G)=n(G')+s+t+1$ and
$$n_2(G)\geq\left\{\begin{array}{ll}
n_2(G')+t+1,  & \mbox{if } s=2;\\
n_2(G')+s+t-3,    & \mbox{if } s\geq 3,
\end{array}\right.$$
where $t\geq 1$. This yields $n_2(G)\geq
\dfrac{n(G)}{5}+2$.
\end{proof}


\subsubsection{Path-vertex schemas}

In one of our proofs below (namely, Theorem~\ref{ThLargeOrder}), we will make use of the following notion, 
which we believe is of independent interest.
Consider an integer $k\geq 2$ and a graph $G$ of minimum degree
$\delta(G) \geq k+1$. By a \textit{$k$-path-vertex schema} of $G$ we mean a pair
$\mathcal{S}=(P,x)$ consisting of a path $P$ and a vertex $x$ not in
$P$ having exactly $k+1$ neighbors in $P$. Using a longest path argument, 
it is easy to see that every graph contains a path-vertex schema. Observe
also that the subgraph induced by $V(P)\cup\{x\}$ contains $k$
cycles of different length (all containing $x$). For convenience, we
sometimes consider $\mathcal{S}$ as a subgraph of $G$. So
$V(\mathcal{S})=V(P)\cup\{x\}$, and
$G-\mathcal{S}=G-(V(P)\cup\{x\})$. The \textit{cardinality} of $\mathcal{S}$
is $\vert V(\mathcal{S}) \rvert$. It is obvious that this cardinality is at
least $k+2$. An \textit{optimal} $k$-path-vertex schema is a schema of
minimum cardinality.
Observe that in this case the extremities of $P$ are neighbors of $x$.

We will make use of the following result concerning optimal path-vertex schemas.

\begin{lemma}\label{LeOptimalSchema}
Let $G$ be a graph with $\delta(G) \geq k+1$, and $\mathcal{S}=(P,x)$ be an optimal $k$-path-vertex schema of $G$. 
Then every vertex $y$ of $G-\mathcal{S}$ has at most $k+2$ neighbors in
$V(\mathcal{S})$. Moreover, if there exists a vertex $y$ in
$G-\mathcal{S}$ having exactly $k+2$ neighbors in $V(\mathcal{S})$,
then $x$ is a neighbor of $y$, all the vertices of $P$ are neighbors
of $x$, and $|V(\mathcal{S})|=k+2$.
\end{lemma}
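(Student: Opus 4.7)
The plan is to treat the two assertions of the lemma in turn.

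For the first, I would use a standard sub-path reduction. Suppose for contradiction that some $y \in G - \mathcal{S}$ has at least $k+3$ neighbors in $V(\mathcal{S}) = V(P) \cup \{x\}$. Since $x$ contributes at most one, $y$ has at least $k+2$ neighbors on $P$; label $k+2$ of them as $u_1 < u_2 < \ldots < u_{k+2}$ in the order along $P$, and let $P'$ be the sub-path of $P$ from $u_1$ to $u_{k+1}$. Then $y \notin V(P')$ and $y$ has exactly $k+1$ neighbors on $P'$, so $(P', y)$ is a $k$-path-vertex schema of cardinality at most $|V(P)|$ (since $u_{k+2}$ is excluded from $P'$), hence strictly less than $|V(\mathcal{S})|$, contradicting optimality.

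For the ``moreover'' clause, conclusion (a) follows immediately: having exactly $k+2$ neighbors in $V(\mathcal{S})$ forces $y \sim x$ and $|N(y) \cap V(P)| = k+1$, as otherwise $|N(y) \cap V(P)| \geq k+2$ and the argument above would produce a smaller schema. Next, $(P, y)$ has the same cardinality as $\mathcal{S}$ and is therefore also optimal; by the extremity property noted just before the lemma, both endpoints $v_1, v_m$ of $P$ belong to $N(y)$. In particular, $v_1, v_m \in N(x) \cap N(y)$.

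To prove (b) and (c), I would argue by contradiction, assuming $|V(P)| \geq k+2$ and constructing a strictly smaller schema. Write $P = v_1 v_2 \ldots v_m$, and let $u_1 = v_1, u_2, \ldots, u_k, u_{k+1} = v_m$ be the $y$-neighbors on $P$ in order. The key construction is a \emph{detour through $y$}: for $a < b$ in $\{1, \ldots, k+1\}$, the path $P^* = v_1 \ldots u_a \, y \, u_b \ldots v_m$, obtained by replacing the sub-path of $P$ from $u_a$ to $u_b$ by the two-edge path $u_a y u_b$, is a path in $G$; since $y \sim x$, a direct count gives $x$ precisely $k+2-\alpha(a,b)$ neighbors on $P^*$, where $\alpha(a,b)$ is the number of $x$-neighbors strictly between $u_a$ and $u_b$ on $P$. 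Thus $(P^*, x)$ (possibly after trimming one endpoint) is a valid $k$-path-vertex schema whenever $\alpha(a,b) \leq 1$, and its cardinality is strictly less than $|V(\mathcal{S})|$ provided the skipped segment has at least $1$ vertex (when $\alpha = 0$) or at least $2$ (when $\alpha = 1$). A pigeonhole count on the $k$ consecutive-$u_j$ gaps, using $\sum_{a=1}^{k} \alpha(a, a+1) = k - 1 - s$ (where $s = |N(x) \cap \{u_2, \ldots, u_k\}|$) and $\sum_{a=1}^{k} (i_{u_{a+1}} - i_{u_a}) = m - 1$, produces a suitable $(a, b)$ in most cases; the remaining ``clustered'' configurations are handled by taking $b > a+1$ (a multi-gap detour) or by the symmetric strategy with the roles of $x$ and $y$ swapped (an $x$-detour built on the optimal schema $(P, y)$).

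The main obstacle is the combinatorial case analysis guaranteeing that at least one of the two detour strategies always succeeds when $|V(P)| \geq k+2$. The delicate configurations are those in which the $y$-neighbors (and simultaneously the $x$-neighbors) cluster along $P$ so that every single-gap detour fails; in each such case, using multi-gap detours together with the joint constraint $|N(x) \cap V(P)| = |N(y) \cap V(P)| = k+1$ and the extremity condition $\{v_1, v_m\} \subseteq N(x) \cap N(y)$, one locates a successful construction.
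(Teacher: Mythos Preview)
Your treatment of the first assertion and of conclusion (a) in the ``moreover'' clause is correct and matches the paper's argument: a sub-path containing exactly $k+1$ of the $y$-neighbours yields a strictly smaller schema, forcing $|N(y)\cap V(P)|\le k+1$, and in the equality case one gets $xy\in E(G)$, that $(P,y)$ is optimal, and hence that both endpoints of $P$ lie in $N(x)\cap N(y)$.

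The gap is in your handling of (b) and (c). Your detour-through-$y$ strategy, together with the promised pigeonhole/case analysis on the gaps between consecutive $y$-neighbours, is not carried out; you yourself flag ``delicate configurations'' that are only asserted to be handled by multi-gap detours or by swapping the roles of $x$ and $y$. As written this is a plan, not a proof, and it is not clear that the clustered cases can all be dispatched without further ideas.

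The paper avoids the entire case analysis with a single clean construction that you overlooked. The key observation is that $y$ is adjacent to \emph{both} endpoints of $P$, so one can ``wrap around'' through $y$. Writing $P=(u_1,\dots,u_p)$ and letting $i$ be the \emph{largest} index with $xu_i\notin E(G)$ (so $2\le i\le p-1$), set
\[
P' \;=\;
\begin{cases}
(u_{i+2},\dots,u_p,\,y,\,u_1,\dots,u_{i-1}) & \text{if } i\le p-2,\\[2pt]
(y,\,u_1,\dots,u_{p-2}) & \text{if } i=p-1.
\end{cases}
\]
In either case $P'$ is a path (using $yu_1,yu_p\in E(G)$) on $p-1$ vertices, obtained from $P$ by deleting $u_i$ (a non-neighbour of $x$) and one vertex that is a neighbour of $x$, while inserting $y$ (a neighbour of $x$). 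Hence $x$ has exactly $k+1$ neighbours on $P'$, and $(P',x)$ is a $k$-path-vertex schema of cardinality $p<p+1=|V(\mathcal S)|$, contradicting optimality. Thus every vertex of $P$ is adjacent to $x$, which forces $|V(P)|=k+1$ and $|V(\mathcal S)|=k+2$. This replaces your entire detour/pigeonhole programme with a two-line construction.
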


\begin{proof}
Suppose $P=(u_1,u_2,\ldots,u_p)$. If $y$ has at least $k+2$
neighbors in $V(P)$, then there exists $i$ (with $2\leq i\leq p$) such
that $y$ has exactly $k+1$ neighbors in $P'=(u_i,\ldots,u_{p})$.
Thus $P'$ and $y$ form a $k$-path-vertex schema of cardinality less
than $\mathcal{S}$, a contradiction. This implies that $y$ has at
most $k+2$ neighbors in $V(\mathcal{S})$.

Suppose that $y$ has exactly $k+2$ neighbors in $V(\mathcal{S})$. By
the analysis above, we can see that $y$ has exactly $k+1$ neighbors
on $P$ and $xy\in E(G)$. Thus $(P,y)$ is an optimal $k$-path-vertex
schema, and then $u_1$ and $u_p$ are neighbors of $y$. Suppose for
the sake of a contradiction that there exists a vertex of $P$ which
is not a neighbor of $x$. Clearly $u_1$ and $u_p$ are neighbors of
$x$. Let $i$ be the maximum index such that $xu_i\notin E(G)$. Thus
$2\leq i\leq p-1$. If $i\leq p-2$, then let
$P'=(u_{i+2},\ldots,u_p,y,u_1,\ldots,u_{i-1})$; otherwise, if $i=p-1$, let
$P'=(y,u_1,\ldots,u_{p-2})$. Then $P'$ and $x$ form a
$k$-path-vertex schema of cardinality less than $\mathcal{S}$, a
contradiction. So, all vertices of $P$ are neighbors of $x$, and
clearly we have $|V(\mathcal{S})|=k+2$.
\end{proof}


\subsection{Main results for disjoint cycles of different lengths in undirected graphs} \label{section:undirected}

We now consider the following question: 
What is the minimum degree $f(k)$ required
so that a graph with minimum degree $f(k)$ has at least $k$ vertex-disjoint
cycles of different lengths?
In particular, we exhibit the best possible function.  

\begin{theorem}\label{ThMinimumDegree}
{\bf a)} For every $k \geq 1$, there exists a minimum integer
$f(k)$ such that every graph of minimum degree at least $f(k)$
contains $k$ vertex-disjoint cycles of different lengths. \newline
{\bf b)} We have $f(k)=\dfrac{k^2+5k-2}{2}$.
\end{theorem}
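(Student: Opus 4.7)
For the lower bound $f(k) \geq \frac{k^2+5k-2}{2}$, I would exhibit the complete graph $K_n$ with $n = \frac{k^2+5k-2}{2}$. Its minimum degree equals $n-1 = \frac{k^2+5k-4}{2}$, whereas any family of $k$ pairwise vertex-disjoint cycles of pairwise different lengths uses at least $3 + 4 + \cdots + (k+2) = \frac{k^2+5k}{2} > n$ vertices. Hence $K_n$ contains no such family, which proves the lower bound and simultaneously shows the value in (b) is tight.

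For the upper bound I would argue by induction on $k$, with trivial base case $k=1$ (a graph with minimum degree at least $2$ has a cycle, cf.\ Proposition~\ref{ThCDL}). Assume the result for $k-1$ and let $G$ satisfy $\delta(G) \geq f(k) = \frac{k^2+5k-2}{2}$. Since $f(k) \geq k+1$, a longest-path argument as sketched before Lemma~\ref{LeOptimalSchema} produces a $k$-path-vertex schema of $G$; I would choose an optimal such schema $\mathcal{S} = (P,x)$. Writing the neighbors of $x$ on $P$ as $u_{i_1},\ldots,u_{i_{k+1}}$ in order along $P$, the subpath of $P$ from $u_{i_1}$ to $u_{i_j}$ completed by $x$ is, for each $j \in \{2,\ldots,k+1\}$, a cycle of length $i_j - i_1 + 2$ contained in $\mathcal{S}$; together these are $k$ cycles through $x$ with pairwise distinct lengths.

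Next, I would apply the inductive hypothesis to $G - \mathcal{S}$. First I would show $V(G) \neq V(\mathcal{S})$: otherwise every neighbor of $x$ lies on $P$ and $\deg_G(x) = k+1 < f(k)$ for $k \geq 2$, contradicting $\delta(G) \geq f(k)$. Hence $G - \mathcal{S}$ is nonempty, and Lemma~\ref{LeOptimalSchema} gives that every vertex of $G - \mathcal{S}$ has at most $k+2$ neighbors inside $\mathcal{S}$, whence
\[
\delta(G - \mathcal{S}) \;\geq\; f(k) - (k+2) \;=\; \frac{k^2+3k-6}{2} \;=\; f(k-1).
\]
By induction, $G - \mathcal{S}$ contains $k-1$ pairwise vertex-disjoint cycles of distinct lengths $\ell_1 < \cdots < \ell_{k-1}$. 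Among the $k$ cycles through $x$ in $\mathcal{S}$, at least one has length outside $\{\ell_1,\ldots,\ell_{k-1}\}$; taking it together with the $k-1$ cycles in $G - \mathcal{S}$ produces the desired collection of $k$ vertex-disjoint cycles of pairwise different lengths.

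The main obstacle is making the inductive step close on the nose. It rests on two observations: nonemptiness of $G - \mathcal{S}$, which is why the strict inequality $f(k) > k+1$ for $k \geq 2$ matters; and the arithmetic identity $f(k) - f(k-1) = k+2$, which exactly matches the bound in Lemma~\ref{LeOptimalSchema} and lets the degree loss when deleting $\mathcal{S}$ be absorbed precisely, with no slack to spare.
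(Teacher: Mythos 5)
Your proposal is correct, and while the lower bound (counting the vertices needed by $k$ disjoint cycles of distinct lengths inside a complete graph) is the same as the paper's, your inductive step for the upper bound takes a genuinely different route. The paper proves $f(k)\leq f(k-1)+k+2$ by invoking Stiebitz's decomposition theorem (Theorem~\ref{ThSt}) to partition $V(G)$ into a part inducing minimum degree at least $f(k-1)$ and a part inducing minimum degree at least $k+1$, and then applies Proposition~\ref{ThCDL} to the second part to obtain $k$ (not necessarily disjoint) cycles of different lengths, one of which has a new length. You instead delete an optimal $k$-path-vertex schema $\mathcal{S}=(P,x)$: the $k$ cycles through $x$ inside $\mathcal{S}$ play the role of the cycles supplied by Proposition~\ref{ThCDL}, and Lemma~\ref{LeOptimalSchema} guarantees each vertex of $G-\mathcal{S}$ loses at most $k+2$ neighbours, so $\delta(G-\mathcal{S})\geq f(k)-(k+2)=f(k-1)$ and the induction closes exactly; your nonemptiness check $d_G(x)=k+1<f(k)$ for $k\geq 2$ is a point the paper's route never needs, and it is correctly handled. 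Both arguments yield the same recursion $f(k)\leq f(k-1)+k+2$; the paper's is shorter but leans on the nontrivial external Theorem~\ref{ThSt}, whereas yours is self-contained modulo the schema machinery, which the paper develops only for the refinement in Theorem~\ref{ThLargeOrder} -- in effect you run a simplified version of that later proof at the higher degree threshold, where the $k+2$ loss is absorbed with no slack and none of the case analysis required there.
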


\begin{proof}
{\bf a)} We proceed by induction on $k$. Clearly $f(1)$ exists (and
we have $f(1)=2$). Suppose that the assertion is true for $k-1$ (where $k\geq 2$), 
and let us study it for $k$. Let $G$ be an arbitrary graph
of minimum degree at least $f(k-1)+k+2$. By Theorem~\ref{ThSt},
there exists a partition $(V_1,V_2)$ of $V(G)$ such that $G\lbrack
V_1\rbrack$ is of minimum degree at least $f(k-1)$ and $G\lbrack
V_2\rbrack$ is of minimum degree at least $k+1$. Then $G\lbrack
V_1\rbrack$ contains $k-1$ disjoint cycles $C_1,\ldots, C_{k-1}$ of
different lengths. By Proposition~\ref{ThCDL}, subgraph $G\lbrack V_2\rbrack$
contains $k$ cycles of different lengths. Then one of these cycles
has length distinct from those of the cycles $C_1,\ldots, C_{k-1}$.
We get then a collection of $k$ vertex-disjoint cycles of different
lengths. So the assertion is true for $k$, and the result is
proved. Furthermore, we have $f(k)\leq f(k-1)+k+2$.

\medskip

{\bf b)} We have $f(i)\leq f(i-1)+i+2$ for $2\leq i\leq k$. By
addition and simplification we get $f(k)\leq f(1)+4+\cdots + k+2$,
hence $f(k)\leq 2+\dfrac{(k+2)(k+3)}{2}-6$, which yields
\setcounter{equation}{0}
\begin{align}
f(k)\leq \dfrac{k^2+5k-2}{2}.
\end{align}
On the other hand since the complete graph on $f(k)+1$ vertices is
of minimum degree $f(k)$, it contains $k$ vertex-disjoint cycles of
different lengths. It follows that $f(k)+1\geq 3+\cdots+(k+2)$, hence
$f(k)+1\geq\dfrac{(k+2)(k+3)}{2}-3$, which yields
\begin{align}
f(k)\geq \dfrac{k^2+5k-2}{2}.
\end{align}
From Inequalities (1) and (2), we get $f(k)=\dfrac{k^2+5k-2}{2}$.
\end{proof}
\medskip

For triangle-free graphs, the function $f(k)$ from Theorem~\ref{ThMinimumDegree}
can be refined to the following, where the exhibited function ($g(k)$ in the following statement) is again best possible.

\begin{theorem}\label{ThTriangleFree}
{\bf a)} For every $k \geq 1$, there exists a minimum integer
$g(k)$ such that every triangle-free graph of minimum degree at
least $g(k)$ contains $k$ vertex-disjoint cycles of different
lengths.
\newline {\bf b)} We have $g(k)=\dfrac{k(k+3)}{2}$.
\end{theorem}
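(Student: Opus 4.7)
The plan is to mirror the proof of Theorem~\ref{ThMinimumDegree}, replacing Stiebitz's partition theorem by Kaneko's triangle-free analogue (Theorem~\ref{ThKa}); the savings of one unit per level in the inductive step is exactly what produces the gain from $\frac{k^2+5k-2}{2}$ down to $\frac{k(k+3)}{2}$.

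For the upper bound in part (b), I would proceed by induction on $k$, with base case $g(1)=2$ (directly from Proposition~\ref{ThCDL}). For the inductive step, given a triangle-free graph $G$ with $\delta(G) \geq g(k-1) + (k+1)$, apply Theorem~\ref{ThKa} with $s = g(k-1)$ and $t = k+1$ to obtain a partition $(V_1,V_2)$ of $V(G)$ such that $G[V_1]$ has minimum degree at least $g(k-1)$ and $G[V_2]$ has minimum degree at least $k+1$. Since $G[V_1]$ is still triangle-free, by the induction hypothesis it contains $k-1$ vertex-disjoint cycles $C_1,\ldots,C_{k-1}$ of pairwise distinct lengths. By Proposition~\ref{ThCDL}, $G[V_2]$ contains $k$ cycles of pairwise distinct lengths, so at least one has length different from those of $C_1,\ldots,C_{k-1}$. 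Since $V_1 \cap V_2 = \emptyset$, this produces the required $k$ vertex-disjoint cycles of distinct lengths. This gives $g(k) \leq g(k-1) + k + 1$, and summing (together with $g(1)=2$) yields $g(k) \leq 2 + \sum_{i=2}^{k}(i+1) = \frac{k(k+3)}{2}$, which in particular also settles part (a).

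For the matching lower bound, I would use the complete bipartite graph $K_{m,m}$ with $m = \frac{k(k+3)}{2} - 1$. This graph is triangle-free and has minimum degree exactly $m$. Every cycle in $K_{m,m}$ is even of length at least $4$, so any collection of $k$ vertex-disjoint cycles of distinct lengths has lengths $2a_1 < 2a_2 < \cdots < 2a_k$ with $a_i \geq i+1$, using $a_i$ vertices on each side of the bipartition. Such a collection would require
\[
m \;\geq\; \sum_{i=1}^{k} a_i \;\geq\; 2 + 3 + \cdots + (k+1) \;=\; \frac{(k+1)(k+2)}{2} - 1 \;=\; \frac{k(k+3)}{2},
\]
contradicting $m = \frac{k(k+3)}{2} - 1$. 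Hence $K_{m,m}$ is a triangle-free graph of minimum degree $\frac{k(k+3)}{2}-1$ with no $k$ vertex-disjoint cycles of different lengths, proving $g(k) \geq \frac{k(k+3)}{2}$.

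There is no real obstacle: the only things to check are that Kaneko's threshold $s+t$ (as opposed to Stiebitz's $s+t+1$) combines correctly with the telescoping sum, and that the bipartite lower-bound construction is tight, both of which are straightforward arithmetic. The one subtlety worth stating explicitly is that the cycles produced in $G[V_1]$ by induction are automatically vertex-disjoint from those in $G[V_2]$ because the partition is on $V(G)$, so the pigeonhole step on cycle lengths is all that is needed to finish the induction.
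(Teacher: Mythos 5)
Your proposal is correct and follows essentially the same route as the paper: induction using Kaneko's partition theorem together with Proposition~\ref{ThCDL} for the upper bound, and a counting argument on complete bipartite graphs for the lower bound. The only (cosmetic) difference is that you exhibit $K_{m,m}$ with $m=\frac{k(k+3)}{2}-1$ as an explicit counterexample, while the paper phrases the same count via the definition of $g(k)$ applied to $K_{g(k),g(k)}$.
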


\begin{proof}
{\bf a)} We proceed by induction on $k$. Clearly $g(1)$ exists (and
$g(1)=2$). Suppose that the assertion is true for $k-1$, where
$k\geq 2$. Let $G$ be an arbitrary graph of minimum degree at least
$g(k-1)+k+1$. By Theorem \ref{ThKa}, there exists a partition
$(V_1,V_2)$ of $V(G)$ such that $G\lbrack V_1\rbrack$ is of minimum
degree at least $g(k-1)$ and $G\lbrack V_2\rbrack$ is of minimum
degree at least $k+1$. Then $G\lbrack V_1\rbrack$ contains $k-1$
vertex-disjoint cycles $C_1,\ldots, C_{k-1}$ of different lengths.
By Proposition \ref{ThCDL}, subgraph $G\lbrack V_2\rbrack$ contains $k$ cycles of
different lengths. Then one of these cycles has length distinct from
those of the cycles $C_1,\ldots, C_{k-1}$. We therefore get a collection of
$k$ vertex-disjoint cycles of different lengths.

\medskip

{\bf b)} By the analysis above, we have $g(i)\leq g(i-1)+i+1$ for
$2 \leq i \leq k$. By addition and simplification we get $g(k)\leq
g(1)+3+4+\cdots+(k+1)$. That is,
\begin{align}
g(k)\leq \frac{k(k+3)}{2}.
\end{align}
On the other hand, since the complete bipartite graph
$K_{g(k),g(k)}$ is triangle-free and of minimum degree $g(k)$, it
contains $k$ vertex-disjoint cycles of different lengths. It follows that
$2g(k)\geq 4+6+\cdots+2(k+1)$, which yields
\begin{align}
g(k)\geq\frac{k(k+3)}{2}.
\end{align}
Thus, from Inequalities~(3) and~(4) we get $g(k)=\dfrac{k(k+3)}{2}$.
\end{proof}
\medskip

We now consider the impact on the function $f(k)$ of Theorem~\ref{ThMinimumDegree}
if we additionally require the vertex-disjoint cycles to be of even lengths only.
The function $h(k)$ we exhibit below is best possible.

\begin{theorem}\label{ThEvenCycle}
{\bf a)} For every $k \geq 1$, there exists a minimum integer
$h(k)$ such that every graph of minimum degree at least $h(k)$
contains $k$ vertex-disjoint even cycles of different lengths.
\newline {\bf b)} We have $h(k)=k^2+3k-1$.
\end{theorem}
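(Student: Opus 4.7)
The proof will follow the same two-sided template used for Theorems \ref{ThMinimumDegree} and \ref{ThTriangleFree}: an inductive upper bound obtained by splitting $V(G)$ with Stiebitz's theorem, and a matching lower bound furnished by a complete graph of the right order. The only parameter that changes is the amount of minimum degree we must reserve on the ``second side'' of the partition, since to extract one fresh even cycle we now need to invoke Theorem \ref{ThECDL} instead of Proposition \ref{ThCDL}.

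For part (a) and the upper bound in part (b), I would proceed by induction on $k$. The base case $h(1)=3$ is immediate: $K_3$ has minimum degree $2$ and no even cycle, while Theorem \ref{ThECDL} with $k=1$ produces an even cycle as soon as $\delta(G)\geq 3$. For the inductive step, let $G$ have $\delta(G)\geq h(k-1)+2k+2$. Applying Theorem \ref{ThSt} with $s=h(k-1)$ and $t=2k+1$ yields a partition $(V_1,V_2)$ of $V(G)$ such that $\delta(G[V_1])\geq h(k-1)$ and $\delta(G[V_2])\geq 2k+1$. The induction hypothesis supplies $k-1$ vertex-disjoint even cycles of distinct lengths in $G[V_1]$, while Theorem \ref{ThECDL} supplies $k$ even cycles of distinct lengths in $G[V_2]$; by the pigeonhole principle at least one of the latter has length different from all those found in $V_1$, producing the required $k$ vertex-disjoint even cycles of distinct lengths. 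Hence $h(k)\leq h(k-1)+2k+2$; telescoping from $h(1)=3$ gives
\[
h(k)\ \leq\ 3 + \sum_{i=2}^{k}(2i+2)\ =\ 3 + (k^2+3k-4)\ =\ k^2+3k-1.
\]

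For the lower bound, I would exhibit the complete graph $K_{n}$ with $n=k^2+3k-1$, which has minimum degree $k^2+3k-2$. Any $k$ vertex-disjoint even cycles of pairwise distinct lengths occupy at least
\[
4+6+\cdots+2(k+1)\ =\ (k+1)(k+2)-2\ =\ k^2+3k
\]
vertices, strictly more than $n$. Therefore $K_{n}$ contains no such family, so $h(k)\geq k^2+3k-1$, matching the upper bound.

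There is no real obstacle in this plan: the whole argument is dictated by the template of the preceding two theorems. The only item that deserves care is bookkeeping — confirming that the correct choice in Stiebitz's theorem is $t=2k+1$ (so that Theorem \ref{ThECDL} applies to yield $k$ rather than $k-1$ even cycles in $G[V_2]$, giving us slack to dodge the $k-1$ already-used lengths) and that with base value $h(1)=3$ the telescoping indeed lands on $k^2+3k-1$ and not on $k^2+3k$ or $k^2+3k-2$.
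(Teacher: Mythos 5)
Your proposal is correct and follows essentially the same route as the paper: Stiebitz's theorem with $t=2k+1$ plus Theorem~\ref{ThECDL} for the induction giving $h(k)\leq h(k-1)+2k+2$, telescoped from $h(1)=3$, and the complete-graph counting argument (cycles of lengths at least $4,6,\dots,2(k+1)$ need $k^2+3k$ vertices) for the matching lower bound. The only difference is cosmetic: you phrase the lower bound as non-existence in $K_{k^2+3k-1}$, while the paper phrases it as a necessary inequality on the order of $K_{h(k)+1}$ — the same computation either way.
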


\begin{proof}
{\bf a)} We proceed by induction on $k$. Clearly $h(1)$ exists (and,
by Theorem \ref{ThECDL}, we have $h(1)=3$). Suppose that the
assertion is true for $k-1$, for some $k\geq 2$, and let us study it for $k$.
Let $G$ be an arbitrary graph of minimum degree at least
$h(k-1)+2k+2$. By Theorem \ref{ThSt}, there exists a partition
$(V_1,V_2)$ of $V(G)$ such that $G\lbrack V_1\rbrack$ is of minimum
degree at least $h(k-1)$ and $G\lbrack V_2\rbrack$ is of minimum
degree at least $2k+1$. Then $G\lbrack V_1\rbrack$ contains $k-1$
vertex-disjoint even cycles $C_1,\ldots, C_{k-1}$ being of different
lengths. According to Theorem~\ref{ThECDL}, subgraph $G\lbrack V_2\rbrack$ contains $k$
even cycles of different lengths. Then one of these cycles has
length distinct from those of the cycles $C_1,\ldots, C_{k-1}$. We
get then a collection of $k$ vertex-disjoint even cycles of
different lengths. So, the assertion is true for $k$, and the result
is proved. We also deduce $h(k)\leq h(k-1)+2k+2$.

\medskip

{\bf b)} We have $h(i)\leq h(i-1)+2i+2$ for $2 \leq i \leq k$. By addition
and simplification, we get $h(k)\leq h(1)+2(3+\cdots + k+1)$, hence
$h(k)\leq 3+2\left(\dfrac{(k+1)(k+2)}{2}-3\right)$, which yields
\begin{align}
h(k)\leq k^2+3k-1.
\end{align}
On the other hand since the complete graph on $h(k)+1$ vertices is
of minimum degree $h(k)$, it contains $k$ vertex-disjoint even
cycles of different lengths. It follows that $h(k)+1\geq 2\cdot
2+\cdots+2(k+1)$, hence $h(k)+1\geq
2\left(\dfrac{(k+1)(k+2)}{2}-1\right)$, which yields
\begin{align}
h(k)\geq k^2+3k-1.
\end{align}
We thus deduce $h(k)=k^2+3k-1$ from Inequalities~(5) and~(6).
\end{proof}
\medskip

We extend Theorems \ref{ThMinimumDegree} and \ref{ThEvenCycle} as
follows.

\begin{theorem}\label{ThLengthDivisible}
{\bf a)} Let $r\geq 3$ be an integer. For every $k \geq 1$, there
exists a minimum integer $f_r(k)$ such that every graph of minimum
degree at least $f_r(k)$ contains $k$ vertex-disjoint cycles of
different lengths all divisible by $r$. \newline {\bf b)} We have
$f_r(k)\leq\left\{\begin{array}{ll}
k(k+1)r-1,      & \mbox{if } r \mbox{ is even};\\
(k^2+2k-1)r-1,  & \mbox{if both } k \mbox{ and } r \mbox{ are odd};\\
k(k+2)r-1,      & \mbox{if } k \mbox{ is even and } r \mbox{ is
odd},
\end{array}\right.$ \\
${\rm ~ ~  ~  ~ ~~~~~~\textit{and}}~f_r(k)\geq\left\{\begin{array}{ll}
\dfrac{k(k+1)r}{2}-1,      & \mbox{if } r \mbox{ is even};\\
\dfrac{k(k+1)r}{2},      & \mbox{if } r \mbox{ is odd}.
\end{array}\right.$
\end{theorem}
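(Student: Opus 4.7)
The plan is to mirror the inductive scheme that powers Theorems~\ref{ThMinimumDegree} and~\ref{ThEvenCycle}, feeding Stiebitz's partition theorem~\ref{ThSt} with the ``cycle-producing'' guarantee supplied this time by Corollary~\ref{CoThDi'}. The induction is on $k$, with base case $k=1$: Corollary~\ref{CoThDi'} applied at $k=1$ (which is odd) gives $f_r(1)\le 2r-1$ for every $r\ge 2$, and this also establishes part (a) at $k=1$.

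For the inductive step from $k-1$ to $k$, I would take a graph $G$ whose minimum degree is large enough to partition via Theorem~\ref{ThSt} into $(V_1,V_2)$ with $\delta(G[V_1])\ge f_r(k-1)$ and $\delta(G[V_2])$ meeting the hypothesis of Corollary~\ref{CoThDi'} at parameter $k$. The induction hypothesis supplies $k-1$ vertex-disjoint cycles in $G[V_1]$ of different lengths all divisible by $r$; the corollary supplies $k$ cycles in $G[V_2]$ with the same property and pairwise distinct lengths, so at least one of them has length different from every length used in $V_1$. Tracking the Stiebitz requirement carefully, one needs $\delta(G)\ge f_r(k-1)+2kr$ when $r$ is even or when both $k$ and $r$ are odd, and $\delta(G)\ge f_r(k-1)+2(k+1)r$ when $r$ is odd with $k$ even. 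Iterating these recursions from $i=2$ up to $k$, with the sum split by the parity of $i$ in the odd-$r$ case, yields the three claimed closed forms $k(k+1)r-1$, $(k^2+2k-1)r-1$ and $k(k+2)r-1$ after an elementary arithmetic-progression telescoping.

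For the lower bound I would exhibit a sharp extremal example tailored to each parity of $r$. When $r$ is even, the complete graph $K_n$ with $n=\tfrac{k(k+1)r}{2}-1$ has minimum degree $\tfrac{k(k+1)r}{2}-2$, yet any $k$ vertex-disjoint cycles of distinct lengths divisible by $r$ must use at least $r+2r+\cdots+kr=\tfrac{k(k+1)r}{2}$ vertices, more than $n$ allows; hence $f_r(k)\ge\tfrac{k(k+1)r}{2}-1$. When $r$ is odd, $K_n$ no longer witnesses the right bound, so I would switch to the bipartite graph $K_{n,n}$, in which every cycle is even and therefore, being divisible by the odd integer $r$, automatically divisible by $2r$. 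Any $k$ vertex-disjoint such cycles of distinct lengths then consume at least $2r+4r+\cdots+2kr=k(k+1)r$ vertices, so taking $n=\tfrac{k(k+1)r}{2}-1$ (which gives $\delta(K_{n,n})=n$ and $2n=k(k+1)r-2$) forces the stronger lower bound $f_r(k)\ge\tfrac{k(k+1)r}{2}$.

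The overall strategy is a direct analogue of the preceding theorems, so the real care will go into the parity bookkeeping of the inductive recursion, which alternates between the two increments $2ir$ and $2(i+1)r$ when $r$ is odd and produces the mild discrepancy of $r$ between the $k$-odd and $k$-even closed forms. The switch from $K_n$ to $K_{n,n}$ on the lower-bound side, dictated by the fact that odd cycles divisible by $r$ become accessible precisely when the host graph is allowed to be non-bipartite, is the other point worth emphasising but presents no real obstacle.
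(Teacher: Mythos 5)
Your proposal is correct and follows essentially the same route as the paper: an induction on $k$ combining Stiebitz's partition theorem with Corollary~\ref{CoThDi'} (with the same parity-dependent increments $2kr$ and $2(k+1)r$) for the upper bound, and complete respectively complete bipartite graphs for the lower bound. The only cosmetic difference is that you phrase the lower bound via explicit counterexamples of too-small order, whereas the paper applies the definition of $f_r(k)$ to $K_{f_r(k)+1}$ and $K_{f_r(k),f_r(k)}$; the two arguments are equivalent.
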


\begin{proof}
{\bf a)} We proceed by induction on $k$. Clearly $f_r(1)$ exists
(and, by Theorem~\ref{ThDi'}, we have $f_r(1)\leq 2r-1$). Suppose
that the assertion is true for $k-1$, where $k\geq 2$, and let us study
it for $k$. Let $G$ be an arbitrary graph of minimum degree at least
$$f_r(k-1)+\left\{\begin{array}{ll}
  2(k+1)r,  & \mbox{if } k \mbox{ is even and } r \mbox{ is odd};\\
  2kr       & \mbox{otherwise}.
\end{array}\right.$$
According to Theorem~\ref{ThSt}, there exists a partition $(V_1,V_2)$ of
$V(G)$ such that $G\lbrack V_1\rbrack$ is of minimum degree at least
$f_r(k-1)$ and $G\lbrack V_2\rbrack$ is of minimum degree at least
$2(k+1)r-1$ for $k$ even and $r$ odd, and of minimum degree at least
$2kr-1$ otherwise. Then $G\lbrack V_1\rbrack$ contains $k-1$
disjoint cycles $C_1,\ldots, C_{k-1}$ of different lengths all
divisible by $r$. By Corollary~\ref{CoThDi'}, subgraph $G\lbrack V_2\rbrack$
contains $k$ cycles of different lengths all divisible by $r$. Then
one of these cycles has length distinct from those of the cycles
$C_1,\ldots, C_{k-1}$. We get then a collection of $k$ disjoint
cycles of different lengths all divisible by $r$. So the assertion
is true for $k$, and the existence is proved.

\medskip

{\bf b)} If $r$ is even, then we have
\begin{align*}
f_r(k)  & \leq f_r(k-1)+2kr\\
        & \leq f_r(k-2)+2(k-1)r+2kr\\
        & \leq f_r(1)+\sum_{i=2}^k2ir\\
        & =k(k+1)r-1.
\end{align*}

If both $k$ and $r$ are odd, then we have
\begin{align*}
f_r(k)  & \leq f_r(k-1)+2kr\\
        & \leq f_r(k-2)+2kr+2kr\\
        & \leq f_r(k-3)+2(k-2)r+2kr+2kr\\
        & \leq f_r(1)+\sum_{i=1}^{(k-1)/2}2\cdot(4ir)\\
        & =(k^2+2k-1)r-1.
\end{align*}

If $k$ is even and $r$ is odd, then
\begin{align*}
f_r(k)  &\leq f_r(k-1)+2(k+1)r\\
        &\leq((k-1)^2+2(k-1)-1)r-1+2(k+1)r\\
        &=k(k+2)r-1.
\end{align*}
So the first assertion of \textbf{b)} is proved.

We now focus on the second assertion of \textbf{b)}. For even $r\geq 4$
and $k\geq 1$, the complete graph $K_{f_r(k)+1}$ contains $k$
vertex-disjoint cycles of different lengths, all divisible by $r$.
It follows that $f_r(k)+1\geq r+2r+\cdots +kr$, which yields $f_r(k)\geq
\dfrac{k(k+1)r}{2}-1$. For odd $r\geq 3$ and $k\geq 1$, the complete
bipartite graph $K_{f_r(k),f_r(k)}$ (of order $2f_r(k)$) contains
$k$ disjoint cycles of different lengths, all divisible by $r$. It
follows that $2f_r(k)\geq 2r+4r+\cdots +2kr$, which yields $f_r(k)\geq
\dfrac{k(k+1)r}{2}$. This concludes the proof.
\end{proof}
\medskip

We continue with the following result:

\begin{theorem}
{\bf a)} Let $r\geq 3$ be an odd integer. There exists a minimum
integer $\phi(r)$ such that every graph of minimum degree at least
$\phi(r)$ contains $r$ vertex-disjoint cycles $C_0,\ldots, C_{r-1}$ with
$v(C_i)\equiv i$ (mod $r$) for $0\leq i\leq r-1$.
\newline {\bf b)} We have $\dfrac{r^2+5r-2}{2}\leq\phi(r)\leq 2r^2-1$.
 \end{theorem}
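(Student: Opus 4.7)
The plan is to establish part~(a) and the upper bound in part~(b) by induction on the number of cycles to be produced, and to derive the lower bound in~(b) directly from Theorem~\ref{ThMinimumDegree}, by observing that cycles with distinct residues modulo~$r$ automatically have distinct lengths.

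For the inductive argument, I would strengthen the statement to: for every $1 \leq j \leq r$ and any $j$ distinct residues $i_1, \ldots, i_j \in \{0, 1, \ldots, r-1\}$, every graph of minimum degree at least $2rj-1$ contains $j$ vertex-disjoint cycles $C_1, \ldots, C_j$ with $v(C_\ell) \equiv i_\ell \pmod{r}$ for each $\ell$. The base case $j=1$ is precisely the odd-$r$ consequence of Theorem~\ref{ThDi'} remarked on just after its statement, which produces a cycle of any prescribed residue modulo $r$ as soon as $\delta(G) \geq 2r-1$. For the inductive step, given $G$ with $\delta(G) \geq 2rj-1$, I would apply Theorem~\ref{ThSt} with parameters $s = 2r(j-1)-1$ and $t = 2r-1$ (so that $s+t+1 = 2rj-1$), obtaining a partition $(V_1, V_2)$ of $V(G)$ such that $\delta(G[V_1]) \geq s$ and $\delta(G[V_2]) \geq t$. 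Applying the induction hypothesis in $G[V_1]$ to the residues $i_1, \ldots, i_{j-1}$ and the base case in $G[V_2]$ to the residue $i_j$ yields the required $j$ vertex-disjoint cycles. Specializing to $j = r$ and $\{i_1, \ldots, i_r\} = \{0, 1, \ldots, r-1\}$ simultaneously proves the existence claim of part~(a) and the bound $\phi(r) \leq 2r^2 - 1$.

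For the lower bound, I would observe that any cycles $C_0, \ldots, C_{r-1}$ satisfying $v(C_i) \equiv i \pmod{r}$ have pairwise incongruent, and hence pairwise distinct, lengths. Consequently, any graph with minimum degree at least $\phi(r)$ contains $r$ vertex-disjoint cycles of different lengths, which by Theorem~\ref{ThMinimumDegree}(b) forces $\phi(r) \geq f(r) = \frac{r^2+5r-2}{2}$.

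No serious obstacle is anticipated: the argument is essentially a combination of Stiebitz's partition theorem with Diwan's result, together with a definitional comparison to $f(r)$. The only point requiring a bit of care is formulating the induction hypothesis for an arbitrary tuple of prescribed residues rather than just the canonical set $\{0, 1, \ldots, r-1\}$, so that the residue assigned to $G[V_2]$ at each step can be chosen freely from what remains; this is already guaranteed by the full generality of the base case, which accommodates any residue $m$ modulo $r$.
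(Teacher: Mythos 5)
Your proposal is correct and takes essentially the same route as the paper: the paper's proof obtains a partition of $V(G)$ into $r$ parts each of minimum degree at least $2r-1$ by ``repeated applications'' of Theorem~\ref{ThSt}, which is precisely your induction on $j$ with $s=2r(j-1)-1$ and $t=2r-1$, and then applies the odd-$r$ consequence of Theorem~\ref{ThDi'} in each part. The only cosmetic difference is the lower bound, where you pass through $f(r)$ from Theorem~\ref{ThMinimumDegree}(b) (valid, since distinct residues force distinct lengths and $f$ is defined by minimality), whereas the paper argues directly from $K_{\phi(r)+1}$ -- the same extremal computation one step removed.
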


\begin{proof}
\textbf{a)} Let $G$ be a graph with minimum degree at least
$2r^2-1$. By repeated applications of Theorem~\ref{ThSt}, we get a
partition $(V_0,V_1,\ldots,V_{r-1})$ of $V(G)$ such that for every
$i$, subgraph $G[V_i]$ has minimum degree at least $2r-1$. By Theorem~\ref{ThDi'}, 
subgraph $G[V_i]$ has a cycle $C_i$ of length $i$ (mod $r$).
Thus the collection of $r$ cycles $C_0,C_1,\ldots,C_{r-1}$ is as
required. So $\phi(r)$ exists, and $\phi(r)\leq
2r^2-1$.

\medskip

\textbf{b)} The complete graph $K_{\phi(r)+1}$ contains $r$ disjoint
cycles with the required conditions. It follows that $\phi(r)+1\geq
3+\cdots + r+2$, hence $\phi(r)+1\geq \dfrac{(r+2)(r+3)}{2}-3$,
which implies $\dfrac{r^2+5r-2}{2}\leq\phi(r)$, as claimed.
\end{proof}
\bigskip

We now study refinements of Theorem~\ref{ThMinimumDegree}.
Recall that this result implies that every graph of 
minimum degree at least $f(k)=\dfrac{k^2+5k-2}{2}$ has $k$
vertex-disjoint cycles of different lengths. Furthermore, the complete graph
$K_{f(k)+1}$ shows that the bound on the minimum degree is best
possible. However, if we allow finite exceptions, then this bound
may be improved, as indicated in the following result.

\begin{theorem}\label{ThLargeOrder}
{\bf a)} For every $k\geq 2$, every graph $G$ of order $n\geq
7\cdot\left\lfloor\dfrac{k^2}{4}\right\rfloor$ and $\delta(G)\geq
\dfrac{k(k+3)}{2}$ has $k$ vertex-disjoint cycles of different
lengths.\\
{\bf b)} The bound on $\delta(G)$ is best possible.
\end{theorem}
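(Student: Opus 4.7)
The plan is to establish part (a) by induction on $k$, using Theorem~\ref{ThTriangleFree} in the triangle-free case and the optimal path-vertex schema machinery of Lemma~\ref{LeOptimalSchema} to handle the case in which $G$ contains triangles, and to establish part (b) by exhibiting a simple bipartite extremal family.

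For the base case $k=2$, one has $\delta(G)\geq 5$ and $n\geq 7$. If $G$ is triangle-free, Theorem~\ref{ThTriangleFree} already yields two vertex-disjoint cycles of different lengths. Otherwise, pick a triangle $T$ and inspect $G-V(T)$, which has $\delta\geq 2$ and at least $4$ vertices: either some cycle of $G-V(T)$ has length at least $4$ (in which case it combines with $T$), or every cycle of $G-V(T)$ is a triangle, in which case Lemma~\ref{LeCycleTriangle} forces many degree-$2$ vertices in $G-V(T)$, each necessarily adjacent to all of $V(T)$ by the minimum degree hypothesis; the condition $n\geq 7$ then supplies enough such vertices to build a cycle of length at least $4$ vertex-disjoint from $T$.

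For the inductive step with $k\geq 3$, if $G$ is triangle-free we are done by Theorem~\ref{ThTriangleFree} since $\delta(G)\geq g(k)=\dfrac{k(k+3)}{2}$. Otherwise, pick a triangle $T\subseteq V(G)$ and let $G':=G-V(T)$. A direct computation shows
\[
\delta(G')\;\geq\;\delta(G)-3\;\geq\;\frac{k(k+3)}{2}-3\;=\;g(k-1)+(k-2)\qquad\text{and}\qquad |V(G')|\;\geq\;7\lfloor(k-1)^{2}/4\rfloor,
\]
so the inductive hypothesis applied to $G'$ furnishes vertex-disjoint cycles $C_{1},\ldots,C_{k-1}$ of pairwise distinct lengths in $G'$. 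If none of them is a triangle, the family $\{T,C_{1},\ldots,C_{k-1}\}$ is the sought collection.

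The main obstacle is the case in which some $C_{i}$ is itself a triangle. To overcome it, exploit the $(k-2)$-unit surplus in $\delta(G')$: since $\delta(G')\geq g(k-1)+(k-2)\geq k+1$ for $k\geq 3$, one can take an optimal $(k-1)$-path-vertex schema of $G'$, and Lemma~\ref{LeOptimalSchema} then controls both its cardinality and the number of neighbors any outside vertex can have inside the schema, tightly enough to extract a cycle of length at least $4$ with which to substitute the unwanted triangle $C_{i}$ while keeping the rest of the collection intact; this replacement is the delicate part of the argument. For part (b), take $G=K_{g(k)-1,m}$ with $m\geq g(k)-1$ arbitrarily large: then $\delta(G)=g(k)-1$ and every cycle of $G$ is even, but $k$ vertex-disjoint cycles of distinct even lengths $2\ell_{1}<\cdots<2\ell_{k}$ (with $\ell_{i}\geq 2$) would consume at least $\ell_{1}+\cdots+\ell_{k}\geq 2+3+\cdots+(k+1)=g(k)$ vertices in the smaller partite class, contradicting the fact that it contains only $g(k)-1$ vertices; this shows that the degree bound is tight.
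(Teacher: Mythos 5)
Your part (b) is correct and is essentially the paper's own construction (a complete bipartite graph with smaller side of size $\frac{k(k+3)}{2}-1$), so no issue there. The problem is part (a): the inductive step has a genuine gap at exactly the point you flag as ``the delicate part''. After removing a triangle $T$ and applying induction to $G'=G-V(T)$, you must handle the case where one of the cycles $C_1,\ldots,C_{k-1}$ is itself a triangle, and here you only assert that an optimal $(k-1)$-path-vertex schema of $G'$ together with Lemma~\ref{LeOptimalSchema} lets you ``extract a cycle of length at least $4$'' to substitute for it. No argument is given, and the obstacles are real: the schema is chosen in $G'$ with no reference to where $T$ and the cycles $C_j$ ($j\neq i$) sit, so a cycle extracted from it may meet them; if instead you try to choose the schema inside $G-V(T)-\bigcup_{j\neq i}V(C_j)$, that graph has no minimum-degree guarantee at all (a vertex may have almost all its neighbours on the deleted cycles, since the $C_j$ can be long); and even the degree bookkeeping is off, since Lemma~\ref{LeOptimalSchema} only bounds outside neighbours of a $(k-1)$-schema by $k+1$, giving $\delta(G'-\mathcal{S})\geq\frac{k^2+k-8}{2}$, which falls short of the $\frac{(k-1)(k+2)}{2}$ needed to recurse. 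The paper avoids this trap entirely: it never deletes a triangle, but uses Theorem~\ref{ThSt} to split $V(G)$ into $\lfloor\frac{k+1}{2}\rfloor$ parts of minimum degree $k+1$, thereby finding a $k$-path-vertex schema of $G$ of order at most $n/\lfloor\frac{k+1}{2}\rfloor$ (this size control is what preserves the order hypothesis for induction), and then distinguishes whether some outside vertex has $k+2$ neighbours in an optimal schema; the bad case requires the full structural conclusion of Lemma~\ref{LeOptimalSchema} plus a further longest-path argument in $G''$ to force the needed long cycle. Your sketch contains none of this, so the induction does not go through as written.

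There is also a smaller slip in your base case $k=2$: in the subcase where every cycle of $G-V(T)$ is a triangle, you claim to build a cycle of length at least $4$ vertex-disjoint from $T$, but any cycle disjoint from $T$ lies in $G-V(T)$ and is therefore a triangle by assumption, so such a cycle cannot exist. The repair is to do the opposite: Lemma~\ref{LeCycleTriangle} gives at least four vertices of degree $2$ in $G-V(T)$, each adjacent (by $\delta(G)\geq 5$) to all of $V(T)$; take any triangle $C'$ inside $G-V(T)$, pick a degree-$2$ vertex $u\notin V(C')$, and then $(u,x_1,x_2,x_3,u)$ is a $4$-cycle through $T$ disjoint from $C'$. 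This is fixable, but as stated the step is contradictory; note also that the paper's own base case proceeds differently (growing a clique $x_1,\ldots,x_4$ and then $x_5,x_6$, etc.), though your corrected version would be an acceptable alternative for $k=2$.
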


\begin{proof}
{\bf a)} We prove the claim by induction on~$k$, starting from the row $k=2$.
In other words, we prove that if $G$ is a graph of order $n\geq 7$ and $\delta(G) \geq 5$,
then $G$ contains two vertex-disjoint cycles of different lengths.

If $G$ is triangle-free, then we are done by Theorem
\ref{ThTriangleFree}. Suppose now that $G$ contains a triangle
$C=(x_1,x_2,x_3,x_1)$. We claim that there exists a vertex $x_4$ of $G-C$ having
three neighbors in $C$. Indeed, if it was not the case, then $G-C$ would be
of minimum degree at least $3$ and then it would contain two cycles
of different lengths. One of these cycles and $C$ would  then form a pair
of vertex-disjoint cycles of different lengths, and we would be
done. Clearly the subgraph $H$ induced by $x_1$, $x_2$, $x_3$ and
$x_4$ is complete. Suppose that there exists at most one vertex of
$G-H$ having four neighbors in $H$. Then the graph $G-H$ has at most
one vertex of degree at most $1$. Then $G-H$ contains a cycle (since
any longest path in $G-H$ has one of its end-vertices having at least two
neighbors on the path) and since $H$ contains a triangle and a
4-cycle, we get two vertex-disjoint cycles of different lengths and we are done. 
Suppose thus that there exist two vertices $x_5$
and $x_6$ both having $x_1$, $x_2$, $x_3$ and $x_4$ as neighbors.

Let $H'$ be the subgraph induced by $x_1,\ldots,x_6$. If every
vertex in $G-H'$ has at most three neighbors in $H'$, then
$\delta(G-H')\geq 2$ and $G-H'$ contains a cycle. Since $H\subset
H'$ contains a triangle and a 4-cycle, we get two vertex-disjoint
cycles of different lengths and we are done. So assume now that
there exists a vertex $x_7$ having at least four neighbors in $\lbrace
x_1,\ldots,x_6\rbrace$. Then $x_7$ has at least two neighbors in
$\lbrace x_1,\ldots,x_4\rbrace$, say $x_7x_1,x_7x_2\in E(G)$. Then
$(x_1,x_2,x_7,x_1)$ and $(x_3,x_5,x_4,x_6,x_3)$ are two
vertex-disjoint cycles of different lengths. This achieves the
proof of the case $k=2$.

\medskip

Suppose now that the assertion is true up to row
$k-1$, where $k\geq 3$, and let us study it for $k$. So $G$ is a graph of
order $n\geq 7\cdot\left\lfloor\dfrac{k^2}{4}\right\rfloor$ and of
minimum degree
$\delta(G)\geq\dfrac{k(k+3)}{2}=\dfrac{(k+1)(k+2)}{2}-1$.
By applying Theorem \ref{ThSt} repeatedly, we get a partition
$(V_1,V_2,\ldots,V_t)$ of $V(G)$ where $t=\left\lfloor\dfrac{k+1}{2}\right\rfloor$,
such that each $V_i$ induces a graph of minimum degree at least
$k+1$. Hence each $G[V_i]$ has a $k$-path-vertex schema. This
implies that $G$ has a $k$-path-vertex schema of order at most
$\dfrac{n}{\lfloor(k+1)/2\rfloor}$.

Suppose first that there exists an optimal $k$-path-vertex schema
$\mathcal{S}=(P,x)$ such that every vertex $y$ of $G-\mathcal{S}$
has at most $k+1$ neighbors in $\mathcal{S}$. Recall that
$|V(\mathcal{S})|\leq\dfrac{n}{\lfloor(k+1)/2\rfloor}$. Let
$G'=G-\mathcal{S}$. Then the order of $G'$ is
$$n'=n-|V(\mathcal{S})|\geq\dfrac{\lfloor(k-1)/2\rfloor}{\lfloor(k+1)/2\rfloor}n
\geq\left\{\begin{array}{ll} 7k(k-2)/4, & k \mbox{ is even};\\
7(k-1)^2/4, & k \mbox{ is odd}, \end{array}\right.$$ \textit{i.e.} $n'\geq
7\cdot\left\lfloor\dfrac{(k-1)^2}{4}\right\rfloor$. Note that every
vertex $y$ of $G'$ has at least
$$\dfrac{k^2+3k}{2}-k-1=\dfrac{(k-1)^2+3(k-1)}{2}$$ neighbors in $G'$,
so $\delta(G')\geq\dfrac{(k-1)^2+3(k-1)}{2}$. By the induction
hypothesis $G'$ contains $k-1$ vertex-disjoint cycles $C_1,\dots,
C_{k-1}$ of different lengths. Plus, the induced subgraph $G\lbrack
V(\mathcal{S})\rbrack$ contains $k$ cycles of different lengths. Then
one of these cycles and the cycles $C_1,\dots, C_{k-1}$ form a
collection of $k$ vertex-disjoint cycles of different lengths. So in
this case the assertion is proved for $k$.

Suppose now that there does not exist an optimal $k$-path-vertex
schema $\mathcal{S}$ such that every vertex $y$ of $G-\mathcal{S}$
has at most $k+1$ neighbors in $V(\mathcal{S})$. We take an
arbitrary optimal $k$-path-vertex schema $\mathcal{S}=(P,x)$, and
let $y\in V(G-\mathcal{S})$ be a vertex having $k+2$
neighbors in $\mathcal{S}$. By Lemma \ref{LeOptimalSchema}, vertices $x$ and
$y$ are adjacent, all the vertices of $P$ are neighbors of $x$ (and
also of $y$), and $|V(\mathcal{S})|=k+2$. We put
$P=(u_1,u_2,\ldots,u_{k+1})$ and $\varOmega=\lbrace
u_1,u_2\ldots,u_{k-1},x,y\rbrace$. Clearly $G\lbrack
\varOmega\rbrack$ contains $k-1$ cycles of different lengths (ranging from
$3$ to $k+1$).

Let $G'=G-\varOmega$. It is easy to see that $G'$ has minimum degree
at least $\dfrac{(k-1)^2+3(k-1)}{2}$ and has order at least
$7\cdot\left\lfloor\dfrac{(k-1)^2}{4}\right\rfloor$. By the induction
hypothesis $G'$ contains $k-1$ disjoint cycles
$C_1,C_2,\ldots,C_{k-1}$ of different lengths (where we label the $C_i$'s so that their lengths are non-decreasing).
Since $G\lbrack \varOmega\rbrack$ contains $k-1$ cycles of all
lengths from $3$ to $k+1$, it is easy to see that $\lvert
V(C_i)\rvert=i+2$ for $1\leq i\leq k-1$. In particular, cycle $C_{k-1}$ is
of order $k+1$ and $C_{k-2}$ is of order $k$.

Let
$G''=G-\left(\varOmega\cup\bigcup\limits_{i=1}^{k-1}V(C_i)\right)$,
and let $z$ be a vertex of $G''$. We claim that $z$ has at most one
neighbor in $\varOmega$. Indeed, otherwise it is easy to see
that $z$ and the vertices of $\varOmega$ would form a cycle of order $k+2$;
so, together with $C_1, ..., C_{k-2}$ we would then get $k$
disjoint cycles of different lengths. We also claim that $z$ has at most
$\dfrac{k+1}{2}$ neighbors in $C_{k-1}$. Indeed, otherwise the
vertices of $C_{k-1}$ and $z$ would form a cycle $C'_{k-1}$ of order
at least $k+2$; so $C_1,\ldots,C_{k-2},C'_{k-1}$ and an
appropriate cycle of $G\lbrack \varOmega\rbrack$ of order $k+1$
would form a collection of $k$ disjoint cycles of different lengths.
It follows that $z$ has at most $1+3+\cdots+k+\dfrac{k+1}{2}=\dfrac
{k^2+2k-3}{2}$ neighbors in
$\varOmega\cup\bigcup\limits_{i=1}^{k-1}V(C_i)$, and $z$ has at
least $\dfrac{k+3}{2}\geq 3$ neighbors in $G''$. Let
$P'=(v_1,v_2,\ldots,v_s)$ be a longest path in $G''$. Since
$\delta(G'')\geq 3$, we have $s\geq 4$.

Note that all the neighbors of $v_1$ in $G''$ are in $P'$. It
follows that $v_1$ has at most $k$ neighbors in $G''$ (for otherwise
we would have a cycle of length at least $k+2$ and, together with the
cycles $C_1, \ldots C_{k-1}$, we would have the desired cycles). We have seen that
$v_1$ has at most one neighbor in $\varOmega$, and that $v_1$ has at
most $\dfrac{k+1}{2}$ neighbors in $C_{k-1}$. We claim that $v_1$
has more than $\dfrac{k}{2}$ neighbors in $C_{k-2}$. Indeed, suppose
the opposite. Then $v_1$ has at most $1+3+\cdots
+k-1+\dfrac{k}{2}+\dfrac{k+1}{2}=\dfrac{k^2+k-3}{2}$ neighbors in
$\varOmega\cup\bigcup\limits_{i=1}^{k-1}V(C_i)$. This means that $v_1$
has at most $\dfrac{k^2+k-4}{2}$ neighbors in
$\varOmega\cup\bigcup\limits_{i=1}^{k-1}V(C_i)$ and that $v_1$ has
at least $\dfrac{k^2+3k}{2}-\dfrac{k^2+k-4}{2}=k+2$ neighbors in
$G''$, a contradiction. So as we claimed, vertex $v_1$ has more than
$\dfrac{k}{2}$ neighbors in $C_{k-2}$, and similarly $v_s$ has more
than $\dfrac{k}{2}$ neighbors in $C_{k-2}$. But now it is easy to
see that the vertices of $C_{k-2}$ and the vertices of $P'$ form a
cycle of length at least $k+2$, and, together with the cycles $C_i$ 
(where $1\leq i\leq k-1$ and $i\neq k-2$) and an appropriate cycle of
length $k$ of $G\lbrack \Omega\rbrack$, we get $k$ disjoint cycles
of different lengths. This concludes this part of the proof.

\medskip

\textbf{b)} Let $G$ be a complete bipartite graph with two partite
sets of size $\dfrac{k(k+3)}{2}-1$ and $n-\dfrac{k(k+3)}{2}+1$,
respectively, where $n$ is large enough. Then
$\delta(G)=\dfrac{k(k+3)}{2}-1$. Note that $G$ does not have $k$
vertex-disjoint cycles of different lengths. So the bound
on $\delta(G)$ in our statement is sharp.
\end{proof}
\medskip

If we now consider graphs with bounded maximum degree, then we may
further refine Theorem \ref{ThLargeOrder}. 
In particular, we believe the following conjecture should be true.

\begin{conjecture}\label{ConjMaximumDegree}
For every two integers $k$ and $D$, there is an integer $n_0$ such that every
graph $G$ of order at least $n_0$ with
$k+1\leq\delta(G)\leq\Delta(G)\leq D$ has $k$ vertex-disjoint
cycles of different lengths.
\end{conjecture}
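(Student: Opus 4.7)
The natural strategy is induction on $k$, with the base case $k = 2$ supplied by Theorem~\ref{ThConjMaximumDegree}. For the inductive step, I would take a graph $G$ with $k+1 \leq \delta(G) \leq \Delta(G) \leq D$ and $n = |V(G)|$ very large in terms of $k$ and $D$, peel off one cycle $C$ of some length $\ell$, and apply the inductive hypothesis to the residual graph $G' = G - V(C) - X$, where $X$ is a bounded ``buffer'' of vertices whose degree in $G - V(C)$ would otherwise drop below $k+1$. If $|V(C)| + |X|$ is bounded by a function of $k$ and $D$, then $G'$ inherits the hypotheses at level $k-1$ (after slightly enlarging the order threshold), and induction yields $k-1$ disjoint cycles of different lengths in $G'$, which together with $C$ form the desired $k$ cycles --- provided the $k-1$ lengths can be chosen to avoid $\ell$.

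The first technical point is finding such a cycle $C$ with a bounded closed neighbourhood. With $\delta(G) \geq k+1 \geq 3$ and $\Delta(G) \leq D$, a BFS/DFS argument produces a cycle of length $O(\log_{k+1} n)$, and more importantly a vertex $v$ whose ball of radius $r = r(k,D)$ is dense enough to contain a cycle all of whose ``threatened'' neighbours also lie in the ball of radius $r+1$; the latter has size bounded by a function of $k$ and $D$, giving the required $C$ and $X$. The second, more delicate, point is ensuring the induction produces cycles of lengths avoiding $\ell$. To this end, I would strengthen the inductive statement by demanding that the $k$ disjoint cycles have lengths in a prescribed range (say $\{3,\dots,L(k,D)\}$), so that the inductive $k-1$ cycles in $G'$ come from a pool large enough to exclude $\ell$; this is compatible with Proposition~\ref{ThCDL} and with the schema machinery of Lemma~\ref{LeOptimalSchema}.

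The main obstacle is the ``almost-pancyclic'' regime where $G$ has very few distinct cycle lengths globally, forcing $\ell$ to coincide with all lengths produced in $G'$. Here I would invoke structural lemmas in the spirit of Lemmas~\ref{LeCycleTriangle} and~\ref{LeCycleQuadrangle}: if every cycle of a graph with $\delta \geq 2$ has length at most a fixed constant $\ell_0$, then a positive fraction of vertices should have degree exactly $2$, which contradicts $\delta(G) \geq k+1 \geq 3$ once $n$ is large. Making this quantitative --- proving an $\ell_0$-analogue of Lemmas~\ref{LeCycleTriangle}--\ref{LeCycleQuadrangle} for graphs whose cycles are all of bounded length, with a block-decomposition argument controlling the possible $2$-connected building blocks --- is likely the hardest and most interesting step, and would in fact yield an independently useful structural tool.
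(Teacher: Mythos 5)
You are attempting an open problem: the statement is Conjecture~\ref{ConjMaximumDegree}, which the paper does not prove; it only establishes the case $k=2$ (Theorem~\ref{ThConjMaximumDegree}), via a case analysis on the girth supported by Lemmas~\ref{LeCycleTriangle} and~\ref{LeCycleQuadrangle}. So there is no paper proof to compare against, and your text is a programme rather than a proof. Judged on its own terms it has two genuine gaps. The first is the peeling step. To apply the inductive hypothesis to $G'=G-V(C)-X$ you need $\delta(G')\geq k$, so $X$ must swallow every vertex that loses two or more neighbours, and this closure can cascade globally rather than staying inside a ball. Concretely, take $G=C_n^2$, the square of a long cycle: it is $4$-regular, so it satisfies the hypotheses with $k=3$, $D=4$. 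Any cycle of bounded length in $C_n^2$ uses two cyclically consecutive vertices, and once two consecutive vertices are deleted the next vertex along the cycle drops to degree $2$, forcing it into $X$, which in turn forces the next one, and so on around the whole graph. Hence no cycle together with a bounded buffer can be removed while keeping minimum degree at least $3$; the BFS/ball argument does not address this non-local propagation, so the inductive step as described simply is not available (even though $C_n^2$ itself of course satisfies the conjecture).

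The second gap is the strengthened induction hypothesis ``the $k$ cycles have lengths in a prescribed range $\{3,\dots,L(k,D)\}$'': this statement is false for any constant $L(k,D)$, because for fixed $D$ there are graphs of arbitrarily large order with $k+1\leq\delta\leq\Delta\leq D$ and girth exceeding $L(k,D)$ (cages, or random $D$-regular graphs), which contain no cycle at all in the prescribed range. So the pool of admissible lengths must depend on the graph, and then the original difficulty of making the $k-1$ inductive lengths avoid $\ell$ reappears untouched. Your final structural claim --- that a graph with $\delta\geq 3$ cannot have all cycle lengths bounded by a constant once $n$ is large (minimum degree $3$ forces circumference $\Omega(\log n)$) --- is true and is indeed in the spirit of Lemmas~\ref{LeCycleTriangle} and~\ref{LeCycleQuadrangle}, but you leave it unproven, and by itself it does not repair either of the two gaps above; in particular it does not tell you how to combine a globally long cycle spectrum with the local, vertex-disjoint peeling your induction requires.
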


To conclude this section, we show that Conjecture~\ref{ConjMaximumDegree} is true for $k=2$.

\begin{theorem}\label{ThConjMaximumDegree}
For every $D\geq 3$, every graph $G$ of order more than
$20D-46$ with $3 \leq \delta(G) \leq \Delta(G) \leq D$ has two vertex-disjoint cycles of different lengths.
\end{theorem}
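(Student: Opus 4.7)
The plan is to argue by contradiction: assume $G$ satisfies the hypotheses but has no two vertex-disjoint cycles of different lengths, and derive $n(G)\le 20D-46$. Let $C_1$ be a shortest cycle of $G$, of length $g$, and set $G'=G-V(C_1)$. By assumption, every cycle contained in $G'$ has length exactly $g$, for otherwise such a cycle together with $C_1$ would form the forbidden disjoint pair. Let $H$ be the 2-core of $G'$ and $F=V(G')\setminus V(H)$ the set of peeled vertices; then $H$ is an induced subgraph of $G$ with $\delta(H)\ge 2$ and every cycle of $H$ has length $g$.

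The central combinatorial step is to prove
\[
n_2(H)+|F|\ \le\ g(D-2).
\]
Indeed, the edges of $G$ leaving $V(C_1)$ number at most $g(D-2)$ by the maximum degree hypothesis, and each vertex $v\in V(H)$ with $d_H(v)=2$ has (since $d_G(v)\ge 3$) at least one edge to $V(C_1)\cup F$. A double-counting argument on the peeled vertices, using $\delta(G)\ge 3$ and the fact that forming the 2-core removes at most $|F|$ edges incident to $F$ (each peeling step removes at most one edge), yields $|F|+e_G(F,V(H))\le e_G(F,V(C_1))$, from which the displayed bound follows.

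When $g\in\{3,4\}$, we combine the above with the structural lemmas of Section~\ref{section:prelim}: Lemma~\ref{LeCycleTriangle} rewrites as $n(H)\le 3n_2(H)-6$ (for $g=3$) and Lemma~\ref{LeCycleQuadrangle} as $n(H)\le 5n_2(H)-10$ (for $g=4$). Substituting into $n(G)=n(H)+|F|+g$ and optimizing in $(n_2(H),|F|)$ under the constraint $n_2(H)+|F|\le g(D-2)$, the extremum is attained at $|F|=0$, giving $n(G)\le 9D-21$ when $g=3$ and, crucially, $n(G)\le 20D-46$ when $g=4$. Both inequalities contradict $n(G)>20D-46$ for $D\ge 3$.

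The main obstacle is the case $g\ge 5$, where no analogue of Lemma~\ref{LeCycleTriangle} or Lemma~\ref{LeCycleQuadrangle} is available. For this case, my plan is to use that $G$ has girth at least $5$ and $\delta(G)\ge 3=s+t-1$ with $s=t=2$ in order to apply Theorem~\ref{ThDi}, producing a partition $(S,T)$ of $V(G)$ with $\delta(G[S]),\delta(G[T])\ge 2$ and hence two vertex-disjoint cycles, one in each part. If their lengths differ we are done; otherwise their common length $\ell\ge 5$ forces a very rigid block structure on any subgraph of minimum degree at least $2$ in which all cycles have length $\ell$ (for odd $\ell$ the blocks must be $K_2$ or $C_\ell$, and for even $\ell$ they are generalized theta-type graphs), which together with $\Delta(G)\le D$ should again force $n(G)=O(D)$ and contradict the hypothesis on $n(G)$.
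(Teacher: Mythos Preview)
Your treatment of the case $g\in\{3,4\}$ is correct and arrives at the same bound $20D-46$ as the paper, though by a somewhat different mechanism: you pass to the $2$-core of $G-V(C_1)$ and run a clean double-counting argument to get $n_2(H)+|F|\le g(D-2)$, whereas the paper instead grows an induced subgraph $H\supseteq C_1$ greedily (absorbing any vertex with more than half its neighbours already in $H$), bounds $|V(H)|$ and $e(V(H),V(G)\setminus V(H))$ directly, and then applies Lemmas~\ref{LeCycleTriangle} and~\ref{LeCycleQuadrangle} to $G-H$. Both routes are valid; yours is arguably tidier. (One small point you should make explicit: if the $2$-core $H$ is empty, your inequality still gives $|F|\le g(D-2)$ and hence $n(G)\le g(D-1)\le 4(D-1)\le 20D-46$.)

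The case $g\ge 5$, however, is a genuine gap. Applying Theorem~\ref{ThDi} does produce two vertex-disjoint cycles, and if they have different lengths you are done; but your contingency plan when they have the same length $\ell$ does not work as stated. Rigidity of the block structure of $G[S]$ and $G[T]$ (blocks being $K_2$ or $C_\ell$ for odd $\ell$, or generalized theta graphs for even $\ell$) does \emph{not} force $n(G)=O(D)$: a graph with $\delta\ge 2$ and all cycles of a fixed length $\ell$ can be arbitrarily large (take a long chain of $C_\ell$'s glued at cut-vertices), and after the Diwan partition there is no small set with $O(D)$ outgoing edges to play the role that $V(C_1)$ played in your $g\in\{3,4\}$ argument. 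So the step ``together with $\Delta(G)\le D$ should again force $n(G)=O(D)$'' has no visible mechanism.

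The paper handles $g\ge 5$ by an entirely different, direct route: it shows that any graph with $\delta(G)\ge 3$ and girth at least~$5$ already contains two vertex-disjoint cycles of different lengths, via a hands-on case analysis on $g\ge 7$, $g=5$, and $g=6$ (the last case splitting further on the length of a longest path in $G-C$). No $O(D)$ bound on $n(G)$ is derived here; the only use of the order hypothesis is the single observation $n\ge 15$ (which follows from $n>20D-46\ge 14$) needed in one subcase of $g=6$. To complete your proof you will need either to replicate that analysis or to supply a genuinely new argument for $g\ge 5$.
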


\begin{proof}
Suppose that $G$ does not contain two
vertex-disjoint cycles of different lengths. We will show that
$n(G)$ is bounded above by $20D-46$. Let $g$ denote the girth of $G$. We
distinguish two cases.

\medskip

\noindent\textbf{Case 1.} $g=3$ or $g=4$.

Let $C$ be a shortest cycle of $G$. Then $C$ is either a $C_3$ or a
$C_4$. Let $H$ be an induced subgraph of $G$ defined by a sequence
of vertex sets $U_0,U_1,\cdots,U_s$ such that:

\begin{enumerate}
	\item $U_0=V(C)$ and $U_s=V(H)$;
	
	\item for every $i$, where $0\leq i\leq s-1$, there is a vertex $x_i\in
V(G)\backslash U_i$ such that $d_{U_i}(x_i)>\dfrac{d(x_i)}{2}$ and
$U_{i+1}=U_i\cup\{x_i\}$;

	\item for every vertex $x\in V(G)\backslash U_s$, we have $d_{U_s}(x)\leq
\dfrac{d(x)}{2}$.
\end{enumerate}

Note that $n(H)=n(C)+s$. We first show that $s$ is bounded above by a
constant (depending on $D$). Note that $e(U_0,V(G)\backslash
U_0)=\sum_{v\in U_0}(d(v)-2)$ is bounded above by $4D-8$. It is easy to
see that $e(U_{i+1},V(G)\backslash U_{i+1})\leq e(U_i,V(G)\backslash
U_i)-1$. This implies that $e(U_s,V(G)\backslash U_s)\leq 4D-8-s$
and $s\leq 4D-8$.

Let $F=G-H$. Recall that for every vertex $x\in V(F)$, we have
$d_F(x)\geq\left\lceil \dfrac{d(x)}{2}\right\rceil\geq 2$. Thus
$\delta(F)\geq 2$. If $F$ contains a cycle of length different from that of 
$C$, then we are done. So we assume that either every cycle
of $F$ is a triangle, or every cycle of $F$ is a $C_4$. Note that
every vertex in $N_2(F)$ has a neighbor in $H$. Furthermore, we have $n_2(F)\leq
e(V(H),V(F))\leq 4D-8-s$. Now, applying Lemmas~\ref{LeCycleTriangle} and~\ref{LeCycleQuadrangle}, 
we get $$n(F)\leq 5(n_2(F)-2)\leq 5(4D-10-s),$$ and,
thus,
$$n(G)=n(H)+n(F)\leq 4+s+5(4D-10-s)\leq 20D-46.$$

\medskip

\noindent\textbf{Case 2.} $g\geq 5$.

Let $C=(x_1,\ldots,x_g,x_1)$ be a cycle of $G$ of order $g$. It is easy to see that any vertex of $V(G)\setminus V(C)$ has at most one neighbor in $C$ (for otherwise we would have a cycle of length less than $g$). So, the graph $G'=G-C$ is of minimum degree at least $2$ and since $G'$ cannot be of degree at least $3$ (for otherwise $G'$ would contain two cycles of different lengths, and then one of these cycles and $C$ would be disjoint and of different length), subgraph $G'$ is of minimum degree $2$. Let $P=(y_1,\ldots, y_t)$ be a longest path in $G'$. By maximality of $t$, all neighbors of $y_1$ in $G'$ are in $P$. If $y_1$ has three neighbors in $G'$, clearly we are done. It follows that $y_1$ has exactly two neighbors in $P$ and then, necessarily, the neighbor of $y_1$ in $P$ distinct from $y_2$ is $y_g$. So $C'=(y_1,\ldots, y_g, y_1)$ is a cycle of $G'$ of length $g$. We claim that $d_{G'}(y_2) = 2$. Indeed suppose that $y_2$ has a neighbor in $P$ distinct from $y_1$ and $y_3$. Necessarily this neighbor is $y_{g+1}$. But then $(y_1,y_g,y_{g+1},y_2,y_1)$ is a $4$-cycle, which is impossible. Suppose now that $y_2$ has a neighbor $y$ in $V(G')\setminus V(P)$. Then $(y, y_2,\ldots, y_t)$ is a longest path in $G'$. It follows that $y_g$ is a neighbor of $y$. But then $(y_1,y_2,y,y_g,y_1)$ is a $4$-cycle, which is again impossible. Each of the vertices $y_1$ and $y_2$ has one neighbor in $C$. It is then easy to see that when $g \geq 7$, vertices $y_1$, $y_2$ and some vertices of $C$ form a cycle of length less than $g$, which is impossible. So we are done when $g \geq 7$.

Assume first $g=5$. Since each of the vertices $y_1$, $y_2$ has one neighbor in $C$,  vertices $y_1$, $y_2$ and some vertices of $C$ form a cycle $C_1$ of length $6$ (since $g=5$).
Let us consider the graph $G_1=G-(V(C)\cup \lbrace y_1, y_2\rbrace$. Clearly every vertex of $G_1$ distinct from $y_3$ and $y_5$ is of degree at least $2$ in $G_1$. If one of the vertices $y_3$ and $y_5$ is of degree at least $2$ in $G_1$,  then, by considering a longest path in $G_1$, it is easy to see that $G_1$ contains a cycle, and then this cycle and one of the cycles $C$ and $C_1$ would form two vertex-disjoint cycles of different lengths. Suppose now that  $y_3$ and $y_5$ are both of degree $1$ in $G_1$. Since $(y_4, y_3, y_2,y_1,y_5,\ldots, y_t)$ is a path of length $t$, by the previous arguments $y_4$ is of degree $2$ in $G'$. It is easy to see that the graph $G_2=G-V(C)\cup V(C')$ is of minimum degree at least $2$. It follows that  $G_2$ contains a cycle, and then, as previously, we are done. This concludes the case $g=5$.


Suppose now that $g=6$. Let $P'=(y_5,\ldots, y_1, y_6,\ldots, y_t)$ be a longest path of $G'$. By the previous reasoning, vertices $y_5$ and $y_4$ are of degree $2$ in $G'$. It follows that each of the vertices $y_1$, $y_2$ and $y_4$ has one neighbor in $C$. Then it is easy to see that the subgraph $G_1$ of $G$ induced by $V(C)\cup\lbrace y_1, y_2, y_3, y_4 \rbrace$ contains two cycles of different lengths.
It is clear that $C''=(y_t, y_{t-1},\ldots, y_{t-5}, y_t)$ is a $6$-cycle of $G$. If $t\geq 10$, clearly $V(C'')$ is vertex-disjoint with $V(G_1)$; and then it is easy to see that we are done. It cannot be the case that $t=9$ or $t=7$ as $x_4$ and $x_2$ are of degree $2$ in $G'$. Suppose now that $t=6$. It is then easy to see that the graph $G-V(C)\cup V(P)$ does not contain cycles (for otherwise we are done). So $z$ has a neighbor $y_i$ (where $i=3$ or $i=6$). But then the vertices of $C$ and $z$ would form a path of order $7$, a contradiction.

Suppose now that $t=8$. Clearly, vertices $y_7$ and $y_8$ are of degree $2$ in $G'$. Since $n\geq 15$, subset $V(G)\setminus (V(C)\cup V(P))$ is non-empty, and the graph $G-V(C)\cup V(P)$ is of minimum degree at most $1$. Then there exists a vertex $z$ of $G-V(C)\cup V(P)$ having a neighbor in $V(P)$. Since $y_1$, $y_2$, $y_4$, $y_5$, $y_7$ and $y_8$ are of degree $2$ in $G'$, this neighbor is either $y_3$ or $y_6$. Without loss of generality, we may suppose that this neighbor is $y_3$. Suppose that $z$ has another neighbor in $P$. Necessarily this other neighbor is $y_6$ -- but then $(z,y_3,y_4,y_5,y_6,z)$ would be a $5$-cycle, which is impossible. So the only neighbor of $z$ in $P$ is $y_3$. Then $z$ has a neighbor $u$ in $G-V(C)\cup V(P)$. Suppose that $u$ has a neighbor $v$ in $G-V(C)\cup V(P)$. Then $(v,u,z,y_3,\ldots,y_8)$ would be a path of $G'$ of length $9$, which by maximality of $t=8$ is impossible. It follows that $u$ has a neighbor in $P$, and necessarily this neighbor is $y_6$ (because $G'$ does not contain triangles). It is easy to see that $z$ and $u$ are of degree $2$ in $G'$. So the eight vertices of $\varOmega=\lbrace y_1,y_2,y_4,y_5,y_7,y_8, z,u\rbrace$ are of degree $2$ in $G'$ and then each vertex of $\varOmega$ has exactly one neighbor in $C$ (which has order $6$). It follows that there exists a vertex $w$ of $C$ having two neighbors $a$ and $b$ in $\varOmega$. It is easy to see that $d_{G'}(a,b)\leq 3$ and then we get a cycle of length at most $5$, which is not possible. So the case $g=6$ is settled and the result is proved for $g\geq 5$. This concludes the whole proof.
\end{proof}
\medskip

Taking $D=3$ in Theorem \ref{ThConjMaximumDegree}, we get that every
cubic graph of order more than $20D-46=14$ has two vertex-disjoint
cycles of different lengths.

\begin{theorem}\label{ThCubicGraph}
Every cubic graph of order at least $15$ contains two
vertex-disjoint cycles of different lengths.
\end{theorem}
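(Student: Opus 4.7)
The plan is to observe that Theorem~\ref{ThCubicGraph} is a direct specialization of Theorem~\ref{ThConjMaximumDegree} rather than an independent result requiring its own argument. The statement to prove concerns cubic graphs, and the previous theorem already handles every graph $G$ with $3 \leq \delta(G) \leq \Delta(G) \leq D$ of order exceeding $20D-46$. So the entire argument reduces to choosing $D$ appropriately and checking that the hypotheses line up.

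Concretely, I would let $G$ be a cubic graph of order at least $15$, set $D = 3$, and verify the three hypotheses of Theorem~\ref{ThConjMaximumDegree}: cubicity gives $\delta(G) = \Delta(G) = 3$, so $3 \leq \delta(G) \leq \Delta(G) \leq D$ is immediate; and the arithmetic $20D - 46 = 20 \cdot 3 - 46 = 14$ shows that the condition ``order more than $20D - 46$'' translates, for integer orders, precisely to ``order at least $15$.'' Applying Theorem~\ref{ThConjMaximumDegree} then yields two vertex-disjoint cycles of different lengths in $G$, which is exactly the desired conclusion.

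There is essentially no obstacle here: the genuinely difficult work -- the case analysis according to the girth of $G$, the structural arguments on the auxiliary subgraphs $H$ and $F$, and the appeal to Lemmas~\ref{LeCycleTriangle} and~\ref{LeCycleQuadrangle} to control $n(F)$ -- has already been carried out in the proof of Theorem~\ref{ThConjMaximumDegree}. The only thing worth flagging explicitly is that the threshold $14$ appearing in the cubic case comes out of the bound $20D - 46$ specialized at the smallest admissible $D$, so one should note that the argument provides no better bound than $n \geq 15$ for cubic graphs via this route, but that this matches the order stated in Theorem~\ref{ThCubicGraph}.
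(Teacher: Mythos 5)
Your proposal is correct and is exactly how the paper itself obtains Theorem~\ref{ThCubicGraph}: it specializes Theorem~\ref{ThConjMaximumDegree} with $D=3$, noting $20D-46=14$, so that ``order more than $14$'' is precisely ``order at least $15$'' for cubic graphs. Nothing further is needed.
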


\begin{figure}[t]
\begin{center}
\begin{tikzpicture}
   \tikzstyle{VertexStyle}=[shape = circle, fill=black, minimum size = 4pt, inner sep=0pt]
   \SetVertexNoLabel	
   \tikzstyle{EdgeStyle}=[line width=1pt]
   \grHeawood[RA=2.5]
\end{tikzpicture}
\end{center}
\caption{The Heawood graph.}
\label{figure:heawood}
\end{figure}
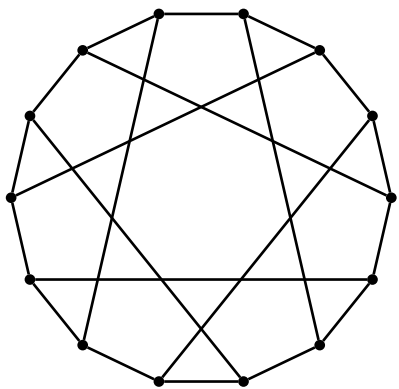

\noindent We remark that the Heawood graph
($(3,6)$-cage, see Figure~\ref{figure:heawood}) of order 14 does not contain two vertex-disjoint
cycles of different lengths. Thus the bound on the
order in Theorem~\ref{ThCubicGraph} is sharp.


\section{Disjoint cycles of different lengths in digraphs} \label{section:directed}

The results from this section are motivated by the following conjecture raised by Bermond and Thomassen in 1981~\cite{BT81}.

\begin{conjecture}[Bermond and Thomassen~\cite{BT81}] \label{conjecture:bt}
For every $k \geq 2$, every digraph of minimum out-degree at least $2k-1$ contains at least $k$ vertex-disjoint directed cycles.
\end{conjecture}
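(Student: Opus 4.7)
The plan is to proceed by induction on $k$, with base case $k=2$ being Thomassen's 1983 theorem that every digraph of minimum out-degree at least $3$ contains two vertex-disjoint directed cycles. For the inductive step, I assume the result for $k-1$, and take a digraph $D$ with $\delta^+(D) \geq 2k-1$. The objective is to isolate a single directed cycle $C$ such that the residual digraph $D - V(C)$ satisfies $\delta^+(D - V(C)) \geq 2(k-1)-1 = 2k-3$; the inductive hypothesis then furnishes $k-1$ vertex-disjoint cycles in $D - V(C)$, which together with $C$ supply the required collection of $k$.

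The first concrete step is to choose $C$ carefully. I would fix a vertex $v_0$ of minimum out-degree and take $C$ to be a shortest directed cycle through $v_0$, whose length is bounded by $O(\log_{2k-1} n)$ via a standard BFS/Moore-type argument inside the iterated out-neighborhood of $v_0$. The second step is to control the out-arcs from $V(D)\setminus V(C)$ into $V(C)$: a vertex $u$ outside $C$ contributes an out-degree loss of $|N^+(u) \cap V(C)|$, while the hypothesis only provides a slack of $2$. The plan is to prove, by a rerouting argument, that if some $u$ has three out-neighbors $a_1, a_2, a_3$ on $C$ in cyclic order, then splicing $u$ into $C$ between a consecutive pair $a_i, a_{i+1}$ produces a cycle through $v_0$ of length strictly less than $|V(C)|$, contradicting the minimality of $C$. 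A parallel analysis, possibly after choosing $C$ inside the subdigraph $D[N^+(v_0)]$, should cover the cases where $v_0$ lies on the "wrong" arc between consecutive out-neighbors of $u$.

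The main obstacle is precisely the sharpness of the $2k-1$ bound. The extremal examples consisting of disjoint copies of the complete digraph $\overleftrightarrow{K_{2k-1}}$ show that this hypothesis cannot be relaxed, which means the induction has essentially zero slack: losing even one additional unit of out-degree when removing $C$ breaks the scheme. The rerouting step therefore has to apply uniformly to every outside vertex $u$ with many out-neighbors on $C$, and this is the point at which the naive approach is known to fail for $k \geq 4$: the out-neighbors of $u$ on $C$ can all lie on the same arc relative to $v_0$, disabling every shortening move. Overcoming this would most plausibly require either an absorption technique that reserves, before selecting $C$, a cushion of vertices capable of compensating for the out-degree loss, or a finer extremal argument on $D[N^+(v_0)]$ that forces a well-distributed short cycle. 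Without such a new ingredient, the strategy above degrades gracefully into a weaker statement of the form $\delta^+(D) \geq f(k)$ with $f(k) = O(k)$ (recovering results of the Alon type), but the sharp constant $2k-1$ in the Bermond--Thomassen conjecture remains out of reach; I identify the construction of a suitable absorbing reservoir as the single hardest ingredient needed to close the gap.
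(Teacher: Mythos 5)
This statement is the Bermond--Thomassen Conjecture. The paper does not prove it: it is stated as a conjecture and used purely as motivation for Section~3, where the authors note it is known only for $k=2$ (Thomassen) and $k=3$ (Lichiardopol, P\'or and Sereni), with Alon's bound of $64k$ on the out-degree being the best general result known to force $k$ disjoint directed cycles. So there is no proof in the paper to compare yours against; a complete argument would be a significant new theorem, not a reconstruction of anything the authors do.

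Your proposal is not such an argument, and to your credit you say so yourself in the final paragraph. The gap is exactly where you locate it. In the inductive step you must exhibit one cycle $C$ whose deletion costs every surviving vertex at most two units of out-degree, and the ``shortest cycle through $v_0$'' rerouting device cannot deliver this: if $u \notin V(C)$ has out-neighbours $a_i,a_{i+1}$ consecutive on $C$, splicing $u$ between them shortens the cycle only when the arc of $C$ from $a_i$ to $a_{i+1}$ has length at least three and does not contain $v_0$; when all out-neighbours of $u$ on $C$ cluster on a short arc, or when $v_0$ blocks every candidate pair, no shortening move exists. Because the bound $2k-1$ is tight (the bidirected complete graph on $2k-1$ vertices has minimum out-degree $2k-2$ and at most $k-1$ disjoint cycles), the induction has zero slack, and a single vertex losing three out-neighbours is fatal. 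The remedies you name --- an absorbing reservoir set aside before choosing $C$, or a finer structural analysis of $D[N^+(v_0)]$ --- are plausible research directions, but you do not construct them, and no such construction is currently known. What you have written is an accurate diagnosis of why the naive induction fails, not a proof of the conjecture.
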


\noindent If true, Conjecture~\ref{conjecture:bt} would be best possible because of the bidirected complete graph on $2k$ vertices.
Towards Conjecture~\ref{conjecture:bt}, one could more generally wonder whether for every $k$ there is a smallest finite function $f(k)$ such that every digraph of minimum out-degree at least $f(k)$ contains at least $k$ vertex-disjoint directed cycles (so $f(k)$ should be equal to $2k-1$ according to Conjecture~\ref{conjecture:bt}). Thomassen first proved in~\cite{Tho83} that $f(k)$ exists for every $k \geq 1$. Later, Alon improved the value of $f(k)$ to $64k$~\cite{Alo96}. For $k=2$, Thomassen proved in~\cite{Tho83} that $f(k)=3$, which confirms Conjecture~\ref{conjecture:bt} for $k=2$. Later on, Lichiardopol, P\'or and Sereni proved that for $k=3$ the best value for $f(k)$ is $5$, again confirming Conjecture~\ref{conjecture:bt} for $k=3$~\cite{LPS09}. This apart, Conjecture~\ref{conjecture:bt} is still open, though some more partial results may be found in literature (see \textit{e.g.} \cite{BLS09}).

Motivated by Conjecture~\ref{conjecture:bt} and in the same flavour as in Section~\ref{section:undirectedg}, one can wonder about the existence of a (smallest) finite function $g(k)$ such that every digraph with minimum out-degree at least~$g(k)$ contains $k$ (vertex-) disjoint (directed) cycles of distinct lengths. In this context, the following was conjectured by Lichiardopol~\cite{Lic14}:

\begin{conjecture}[Lichiardopol~\cite{Lic14}] \label{conjecture:digraph-lengths}
For every $k \geq 2$, there exists an integer $g(k)$ such that every digraph of minimum out-degree at least $g(k)$ contains $k$ vertex-disjoint directed cycles of distinct lengths.
\end{conjecture}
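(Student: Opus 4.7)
The plan is induction on $k$, with base case $k=2$ already established by Lichiardopol in \cite{Lic14}. The aim is a recurrence of the shape $g(k) \leq g(k-1) + h(k)$ for some function $h(k)$ to be specified. For the inductive step, given a digraph $D$ of minimum out-degree at least $g(k-1)+h(k)$, the goal is to extract a single directed cycle $C^{*}$ with two properties: its length is controllable (in particular distinct from any prescribed set of $k-1$ forbidden values and larger than all of them), and after removing its vertex set, the residual digraph $D - V(C^{*})$ still has minimum out-degree at least $g(k-1)$. The induction hypothesis applied to $D - V(C^{*})$ then supplies $k-1$ vertex-disjoint directed cycles of pairwise distinct lengths, all shorter than $|C^{*}|$, which together with $C^{*}$ constitute the desired collection.

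To build $C^{*}$ I would combine two ingredients. First, Alon's theorem gives that minimum out-degree at least $64N$ produces $N$ vertex-disjoint directed cycles, so by making $h(k)$ large enough $D$ contains a huge family $\mathcal{F}$ of vertex-disjoint cycles. If $k$ of them already have pairwise distinct lengths we are done; otherwise, pigeonhole produces a large subfamily of cycles all of a common length $\ell$. Second, from this subfamily one exploits the abundance of arcs between the cycles (again coming from the high minimum out-degree) to stitch together a new cycle $C^{*}$ whose length can be prescribed within a wide interval. This second step should rely on a directed analog of ear-decomposition together with results of Shen and others on the cycle spectrum of digraphs of high minimum out-degree, which guarantee that many consecutive cycle lengths occur within strongly connected substructures.

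The principal obstacle is the second requirement on $C^{*}$: after removing its $|C^{*}|$ vertices, the minimum out-degree of the residual digraph must stay above $g(k-1)$. In the undirected setting the analogous loss is absorbed by Stiebitz's partition theorem (Theorem~\ref{ThSt}), but no such partition theorem is known for digraphs with respect to out-degree; this is essentially the open Alon--Bang-Jensen partition problem and is the core difficulty of the conjecture. My proposed workaround is to confine $C^{*}$ inside a small \emph{absorber} $S \subseteq V(D)$ of size bounded by a function of $k$ only, chosen so that every vertex $v \in V(D) \setminus S$ satisfies $|N^{+}(v) \cap S| \leq h(k)-1$; then $V(C^{*}) \subseteq S$ and every vertex outside $S$ loses fewer than $h(k)$ out-neighbors, preserving the induction hypothesis. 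Producing such an absorber $S$ rich enough to realize many distinct cycle lengths is the delicate point and constitutes the main obstacle I expect to face; success here would both prove Lichiardopol's conjecture and yield an explicit (likely polynomial in $k$) upper bound on $g(k)$.
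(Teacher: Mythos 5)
The statement you are trying to prove is stated in the paper as an open conjecture (Conjecture~\ref{conjecture:digraph-lengths}); the paper does not prove it, and only verifies it for tournaments, for regular digraphs, and for digraphs of bounded order. Your proposal does not prove it either, and you are candid about why: every genuinely hard step is deferred. The decisive gap is the one you name yourself. The entire inductive scheme hinges on finding a set $S$ of bounded size such that (i) $D[S]$ has large minimum out-degree (so that Proposition~\ref{lemma:k-cycle-lengths} yields $k$ cycles of distinct lengths inside $S$, one of which can dodge the $k-1$ lengths produced by induction --- note that this is the correct quantifier order; your version, which fixes a single $C^{*}$ of ``prescribed'' length before the inductive cycles are known, does not parse as stated) and (ii) every vertex outside $S$ keeps almost all of its out-neighbours. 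This is precisely the missing digraph analogue of Stiebitz's theorem (Theorem~\ref{ThSt}), i.e.\ the Alon--Bang-Jensen out-degree partition problem, and no construction of such an $S$ is offered. Declaring it an ``absorber'' does not produce it; with that step granted, the conjecture follows in three lines, so the proposal in effect assumes what is to be proved.

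Two secondary steps are also unsound. First, the plan to ``stitch'' many disjoint cycles of a common length $\ell$ into one cycle of controllable length presumes an abundance of arcs \emph{between} those cycles in both directions; large minimum out-degree gives no such thing, since the out-neighbourhoods of a small cycle may lie entirely outside the family. Second, the appeal to cycle-spectrum results guaranteeing many consecutive cycle lengths in digraphs of large minimum out-degree invokes theorems that are not available in the directed setting --- indeed the paper recalls Alon's construction of digraphs of arbitrarily large minimum out-degree in which no two cycles have the same length, which illustrates how poorly controlled directed cycle lengths are. For contrast, the paper circumvents the partition obstacle probabilistically: in Theorems~\ref{theorem:regular} and~\ref{theorem:order} a random colouring, analysed via Chernoff's Inequality and the Lov\'asz Local Lemma, produces a partition $V_1,\dots,V_k$ with $D[V_i]$ of minimum out-degree at least $i$ --- but this only succeeds when regularity or a bound on the order keeps the dependencies (or the union bound) under control, which is exactly why the general conjecture remains open.
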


\noindent It is worth pointing out that a similar function $h(k)$ for the existence of $k$ disjoint cycles of the same length does not exist, 
as Alon proved that there exist digraphs of arbitrarily large minimum out-degree having no two (not necessarily disjoint) cycles of the same length~\cite{Alo96}.
Lichiardopol proved Conjecture~\ref{conjecture:digraph-lengths} for $k=2$ in~\cite{Lic14}, solving a question of Henning and Yeo~\cite{HY12}. We note that $g(k)$ should in general be a quadratic function of $k$; for an illustration of this statement, consider a complete bidirected digraph $D$ on $g(k)+1$ vertices. Since $\delta^+(D)=g(k)$, there exist $k$ disjoint cycles of different lengths in $D$. It follows that $$g(k)+1\geq 2+\cdots +k+1,$$ hence $g(k)+1\geq \frac{(k+1)(k+2)}{2}-1$, which yields $g(k)\geq \frac{k^2+3k-2}{2}$ in general.


\medskip

In this section, we verify Conjecture~\ref{conjecture:digraph-lengths} in several contexts. 
We first verify it for tournaments in Section~\ref{section:tournaments}.
Using the probabilistic method, we then verify it, in Sections~\ref{section:regular} and~\ref{section:small},
when $D$ is regular or when its order is a polynomial function of its minimum out-degree, respectively.
Some concluding remarks are gathered in Section~\ref{section:ccl-digraphs}.


\subsection{Tournaments} \label{section:tournaments}

We here verify Conjecture~\ref{conjecture:digraph-lengths} for tournaments. 
More precisely, for every $k \geq 1$, we study the smallest finite function $g_t(k)$ such that
every tournament of minimum out-degree at least $g_t(k)$ has $k$ vertex-disjoint directed cycles of different lengths.
We exhibit both an upper bound and a lower bound on $g_t(k)$ for every $k$.

In order to prove our upper bound, we need to introduce the following result first.

\begin{lemma} \label{lemma:tournaments}
Every tournament of minimum out-degree $\delta\geq 1$ contains a directed cycle of order at least $2\delta+1$. 
\end{lemma}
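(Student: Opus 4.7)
The plan is to reduce the problem to a strongly connected sub-tournament and then invoke a classical Hamiltonicity result. First, I would consider the acyclic condensation of $T$, which linearly orders the strong components as $T_1,\ldots,T_s$ so that every arc between two distinct components $T_i$ and $T_j$ (with $i<j$) goes from $T_i$ to $T_j$. The terminal component $T_s$ then has the property that every vertex $v\in V(T_s)$ satisfies $N_T^+(v)\subseteq V(T_s)$, whence $d_{T_s}^+(v)=d_T^+(v)\geq\delta$. In particular $\delta^+(T_s)\geq\delta\geq 1$, which forces $T_s$ to contain at least two vertices and therefore to be strongly connected.

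Next I would count arcs inside $T_s$: setting $n_s=|V(T_s)|$, we have $\binom{n_s}{2}=\sum_{v\in V(T_s)}d_{T_s}^+(v)\geq n_s\delta$, which simplifies to $n_s\geq 2\delta+1$. Finally, Camion's theorem asserts that every strongly connected tournament admits a Hamiltonian directed cycle; applied to $T_s$, this yields a directed cycle of $T_s$ (and hence of $T$) of length $n_s\geq 2\delta+1$, as required.

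The main ingredient is Camion's theorem, and there is no substantive obstacle beyond it. The only mild subtlety is recognizing that the terminal strong component is the correct substructure to consider, since it is precisely the place where the minimum out-degree of the whole tournament is preserved; if one tried to work in an arbitrary strong component $T_i$ with $i<s$, some vertices could lose out-arcs going to later components and the counting step would fail.
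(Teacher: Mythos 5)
Your proof is correct. It rests on the same key ingredient as the paper's --- Camion's theorem applied to a strongly connected sub-tournament that inherits the minimum out-degree of $T$ --- but you reach that sub-tournament directly rather than by induction. The paper inducts on the order: if $T$ is strong it is Hamiltonian and done; otherwise a dominance partition $(A,B)$ with $A$ dominating $B$ lets it recurse on $T[B]$, whose minimum out-degree is still at least $\delta$, and the base case $n=2\delta+1$ is handled by noting that the tournament is then $\delta$-regular, hence strong. Iterating the paper's recursion amounts exactly to descending to the terminal strong component of the condensation, which is what you do in a single step; and your arc count $\binom{n_s}{2}=\sum_{v\in V(T_s)}d^+_{T_s}(v)\geq n_s\delta$ replaces the paper's regularity argument for the base case (and makes explicit the fact, only implicit in the paper's setup, that any tournament of minimum out-degree $\delta$ has at least $2\delta+1$ vertices). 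The net effect is a shorter, induction-free argument. One cosmetic remark: $T_s$ is strongly connected by definition of a strong component, so the role of $\delta\geq 1$ (equivalently of your bound $n_s\geq 2\delta+1\geq 3$) is only to ensure $T_s$ has at least three vertices, so that Camion's theorem produces a genuine Hamiltonian directed cycle.
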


\begin{proof}
We proceed by induction on the order $n\geq 2\delta+1$ of a tournament $T$ of minimum out-degree $\delta$. We claim that the assertion is true for $n=2\delta+1$. Indeed, in this case $T$ is a regular tournament of degree $\delta$. So $T$ is strong, and then, by Camion's Theorem, we get that $T$ is Hamiltonian. So the vertices of $T$ form a directed cycle of order $2\delta+1$, and we are done. Suppose now that the assertion is true up to the row $n-1$, where $n\geq 2\delta+2$, and let us study it for $n$. So $T$ is a tournament of minimum out-degree $\delta$ and of order $n\geq 2\delta+2$. If $T$ is strong, then $T$ is Hamiltonian and again we are done. Suppose thus that $T$ is not strong. Then there exists a partition $(A,B)$ of $V(T)$ such that $A$ dominates $B$ (that is every ordered pair $(x,y)$ with $x\in A$ and $y\in B$ is an arc of $T$). Clearly the induced tournament $T\lbrack B\rbrack$ is of minimum out-degree at least $\delta$. So, by the induction hypothesis, we deduce that $T\lbrack B\rbrack$ (and therefore $T$) contains a directed cycle of order at least $2\delta+1$. So the assertion is true for $n$, which concludes the proof.
\end{proof}
\medskip

We are now ready to exhibit an upper bound on $g_t(k)$, and hence to confirm Conjecture~\ref{conjecture:bt} for tournaments.

\begin{theorem}  \label{theorem:tournament}
For every $k\geq 1$, we have $g_t(k) \leq \frac{k^2+4k-3}{2}$. 
\end{theorem}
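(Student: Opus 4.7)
The plan is to carry out a direct iterative construction, extracting $k$ vertex-disjoint directed cycles of lengths $3, 4, \ldots, k+2$ (in this order) from $T$. The main auxiliary ingredient is the following consequence of Lemma~\ref{lemma:tournaments} combined with Moon's classical theorem that every strong tournament on $n \geq 3$ vertices is pancyclic (it contains a cycle of every length from $3$ to $n$): every tournament of minimum out-degree at least $d \geq 1$ contains a cycle of every length from $3$ to $2d+1$. Indeed, the vertex set of any directed cycle in a tournament induces a strong sub-tournament (one can walk around the cycle), so Lemma~\ref{lemma:tournaments} yields a cycle whose vertex set induces a strong sub-tournament of order at least $2d+1$, and Moon's theorem then supplies cycles of every length in that range.

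Starting from $T$ with $\delta^+(T) \geq \frac{k^2+4k-3}{2}$, suppose I have already produced disjoint cycles $C_1, \ldots, C_j$ with $|V(C_i)|=i+2$ for each $i$, and set $T_j := T - \bigcup_{i=1}^j V(C_i)$. The number of removed vertices equals $\sum_{i=1}^j (i+2) = \frac{j^2+5j}{2}$, hence
\[
\delta^+(T_j) \;\geq\; \delta^+(T) - \frac{j^2+5j}{2}.
\]
To find the next cycle $C_{j+1}$ of length exactly $j+3$ in $T_j$, the auxiliary ingredient requires $2\delta^+(T_j)+1 \geq j+3$, which rearranges to $\delta^+(T) \geq \frac{j^2+6j+2}{2}$. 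The right-hand side is monotonically increasing in $j$ and equals precisely $\frac{k^2+4k-3}{2}$ at $j=k-1$, so the hypothesis on $\delta^+(T)$ is exactly what is needed to carry out all $k$ extractions and produce $k$ vertex-disjoint cycles of distinct lengths $3, 4, \ldots, k+2$.

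The only ingredient beyond the material already introduced is Moon's pancyclicity theorem, which is classical and could be recorded as a short lemma just before the proof. There is no substantive obstacle: the entire argument is a bookkeeping check that the out-degree budget suffices at every step, which it does -- tightly so at the final extraction, which is what makes the bound $\frac{k^2+4k-3}{2}$ so clean. Because $\delta^+(T_j)$ is integer-valued, the (possibly) half-integer threshold $(j+2)/2$ is handled automatically and introduces no parity issue in the induction.
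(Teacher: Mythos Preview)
Your argument is correct. The auxiliary claim (Lemma~\ref{lemma:tournaments} plus Moon's theorem giving cycles of every length from $3$ to $2d+1$ in a tournament of minimum out-degree $d$) is valid, and the bookkeeping is accurate: the binding constraint occurs at $j=k-1$, where $\frac{j^2+6j+2}{2}=\frac{k^2+4k-3}{2}$ exactly.

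The paper organises the same ingredients differently. It argues by induction on $k$: the induction hypothesis supplies $k-1$ disjoint cycles of distinct lengths; these are then \emph{shrunk} via Moon's theorem (each $C_i$, whose vertex set induces a strong sub-tournament of order at least $i+2$, contains a cycle of length exactly $i+2$) so that their union has precisely $\frac{k^2+3k-4}{2}$ vertices; finally a single application of Lemma~\ref{lemma:tournaments} to the remaining tournament, which still has out-degree at least $\frac{k+1}{2}$, yields a cycle of length at least $k+2$. Your version is a direct greedy construction that invokes Lemma~\ref{lemma:tournaments} and Moon's theorem $k$ times rather than once each; the paper's version hides most of the work inside the induction hypothesis and needs only ``length $\geq k+2$'' (not an exact length) for the final cycle. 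Both arrive at the same tight arithmetic, and neither buys a stronger conclusion than the other.
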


\begin{proof}
We proceed by induction on $k$. The assertion is clearly true for $k=1$. So suppose that the assertion is true up to row $k-1$ (where $k\geq 2$), and let us study it for $k$. Let $T$ be a tournament of minimum out-degree at least $\frac{k^2+4k-3}{2}$. By the induction hypothesis $T$ contains $k-1$ disjoint cycles $C_1,\ldots, C_{k-1}$ of different lengths. It is easy to see that for every $1\leq i\leq k-1$, cycle $C_i$ contains a cycle $C'_i$ of length $i+2$. We get then a collection $C_1', ..., C_{k-1}'$ of disjoint cycles with $\lvert V(C_i')\rvert =i+2$, and therefore of different lengths.
 
We have $$\lvert V(C_1')\cup\cdots\cup V(C_{k-1}')\rvert =3+\cdots + k+1 =\frac{k^2+3k-4}{2}<\frac{k^2+4k-3}{2}.$$ It thus follows that the tournament $T'=T-V(C_1')\cup\cdots\cup V(C_{k-1}')$ is of positive order and of minimum out-degree at least $\frac{k^2+4k-3}{2}-\frac{k^2+3k-4}{2}=\frac{k+1}{2}$. According to Lemma~\ref{lemma:tournaments}, we know that $T'$ contains a cycle $C'_k$ of length at least $k+2$ and, together with the directed cycles $C_1', ..., C_{k-1}'$, we get then $k$ disjoint cycles of different lengths. Consequently, the assertion is true for $k$, and the result is proved. 
\end{proof}
\medskip

We now deduce a lower bound on $g_t(k)$.

\begin{observation} \label{observation:tournaments}
For every $k \geq 1$, we have $g_t(k)\geq \frac{k^2+5k-2}{4}$.
\end{observation}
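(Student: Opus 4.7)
The plan is simply to construct, for each $k\geq 1$, a tournament with many vertices and large minimum out-degree that is still too small to contain $k$ vertex-disjoint directed cycles of different lengths.

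Since a tournament has no $2$-cycle, every directed cycle has length at least $3$, so any collection of $k$ vertex-disjoint directed cycles of pairwise distinct lengths uses at least
\[
    3 + 4 + \cdots + (k+2) \;=\; \frac{k^2+5k}{2}
\]
vertices. Set $N := \frac{k^2+5k}{2}$ and let $T$ be a tournament on $n = N - 1$ vertices whose minimum out-degree attains the largest possible value $\lfloor (n-1)/2 \rfloor$ among tournaments on $n$ vertices. Such a tournament always exists: take the rotational construction on $\mathbb{Z}/n\mathbb{Z}$ in which $i \to j$ precisely when $j-i \bmod n \in \{1, 2, \ldots, \lfloor (n-1)/2 \rfloor\}$, orienting arbitrarily the antipodal pairs that arise when $n$ is even. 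Since $|V(T)| < N$, tournament $T$ cannot contain the required $k$ directed cycles, whence
\[
    g_t(k) \;\geq\; \left\lfloor \tfrac{N-2}{2} \right\rfloor + 1.
\]

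To finish one just splits on the parity of $N$. If $N$ is even (which happens exactly when $k \equiv 0, 3 \pmod{4}$) the bound reads $N/2 = \frac{k^2+5k}{4}$, already strictly larger than the target. If instead $N$ is odd ($k \equiv 1, 2 \pmod{4}$) the bound reads $(N-1)/2 = \frac{k^2+5k-2}{4}$, which matches the claim exactly (and is in fact attained in that parity, showing that the observation is tight for those $k$). The entire argument is elementary; the only real point to pin down is the existence of a tournament achieving the maximum possible minimum out-degree on $n$ vertices for both parities of $n$, which is handled by the rotational construction.
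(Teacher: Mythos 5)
Your proof is correct and takes essentially the same route as the paper: both arguments rest on the count $3+4+\cdots+(k+2)=\frac{k^2+5k}{2}$ of vertices needed by $k$ vertex-disjoint directed cycles of distinct lengths in a tournament, combined with (near-)regular tournaments maximizing the minimum out-degree for a given order --- the paper argues directly with a regular tournament of degree $g_t(k)$ on $2g_t(k)+1$ vertices, while you take the contrapositive with an explicit rotational tournament on $\frac{k^2+5k}{2}-1$ vertices. As a minor bonus, your even-order case yields the marginally stronger bound $g_t(k)\geq\frac{k^2+5k}{4}$ when $k\equiv 0,3\pmod 4$.
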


\begin{proof}
Let $T$ be a regular tournament of degree $g_t(k)$. 
Then $T$ contains $k$ disjoint cycles of different lengths. 
Since the order of $T$ is $2g_t(k)+1$, it follows that $$2g_t(k)+1\geq 3+\cdots +k+2,$$ 
hence $2g_t(k)+1\geq \frac{(k+2)(k+3)}{2}-3$, which yields $g_t(k)\geq \frac{k^2+5k-2}{4}$.
\end{proof}
\medskip

So, in the context of tournaments, according to Theorem~\ref{theorem:tournament} and Observation~\ref{observation:tournaments} we get the following.

\begin{corollary} \label{corollary:tournaments}
For every $k \geq 1$, we have $$\frac{k^2+5k-2}{4} \leq g_t(k) \leq \frac{k^2+4k-3}{2}.$$
\end{corollary}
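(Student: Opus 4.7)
The plan is extremely short because Corollary~\ref{corollary:tournaments} is nothing more than the conjunction of the two preceding results. First I would state the upper bound $g_t(k) \leq \frac{k^2+4k-3}{2}$ directly from Theorem~\ref{theorem:tournament}, which was established by the inductive argument: assume by induction that $T$ contains $k-1$ disjoint cycles $C_1,\dots,C_{k-1}$ of different lengths; shrink each $C_i$ to a subcycle $C_i'$ of length exactly $i+2$ (possible because every directed cycle contains a shorter directed cycle of each length down to~$3$ in a tournament restricted to its vertex set, via the standard chord-and-triangle argument); then the residual tournament $T'$ has minimum out-degree at least $\frac{k+1}{2}$, so by Lemma~\ref{lemma:tournaments} it contains a directed cycle of length at least $k+2$, which completes the family.

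Next I would state the lower bound $g_t(k) \geq \frac{k^2+5k-2}{4}$ from Observation~\ref{observation:tournaments}, whose proof uses that a regular tournament of degree $g_t(k)$ has exactly $2g_t(k)+1$ vertices and must accommodate $k$ vertex-disjoint cycles whose lengths are at least $3,4,\dots,k+2$, forcing the order inequality $2g_t(k)+1 \geq 3+4+\dots+(k+2)$, which simplifies to the stated bound.

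Combining these two inequalities yields the displayed double inequality of the corollary. There is no real obstacle here: the corollary is a packaging of two already-proved results, and the only ``work'' is to note that the lower and upper bounds apply to the same quantity $g_t(k)$ defined at the start of Section~\ref{section:tournaments}. Accordingly, the proof would simply read:

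\begin{proof}
The upper bound is Theorem~\ref{theorem:tournament} and the lower bound is Observation~\ref{observation:tournaments}.
\end{proof}
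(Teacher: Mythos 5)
Your proposal is correct and matches the paper exactly: the corollary is stated there as an immediate combination of Theorem~\ref{theorem:tournament} (upper bound) and Observation~\ref{observation:tournaments} (lower bound), with no further argument needed, and your recaps of those two proofs are faithful to the paper's.
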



\subsection{Regular digraphs} \label{section:regular}

We now use the probabilistic method to prove, in the current section and upcoming Section~\ref{section:small}, 
Conjecture~\ref{conjecture:digraph-lengths} in two new contexts.
To this aim, we first need to introduce a few tools and lemmas.
The first two are classic tools of the probabilistic method, namely Chernoff's Inequality and the Lov\'asz Local Lemma (see \textit{e.g.}~\cite{MR02}).

\begin{proposition}[Chernoff's Inequality] \label{prop: Chernoff}
Let $X$ be a binomial random variable $BIN(n,p)$. Then, for any $0 \leq t \leq np$,
we have $Pr[|X-np|> t] \leq 2 e^{-t^2/3np}$.
\end{proposition}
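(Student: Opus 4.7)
The plan is to prove Chernoff's Inequality by the standard moment generating function method (often called Chernoff's trick), which bounds tail probabilities of sums of independent bounded random variables. Since $X \sim BIN(n,p)$ can be written as $X=\sum_{i=1}^n X_i$ where the $X_i$ are independent Bernoulli$(p)$ variables, the MGF of $X$ factorises, which is the key leverage point. The result follows in three steps: an upper-tail estimate, a symmetric lower-tail estimate, and a union bound.

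For the upper tail, I would fix $\lambda>0$ and apply Markov's inequality to the non-negative random variable $e^{\lambda X}$:
\begin{equation*}
\Pr[X \geq np+t] \;=\; \Pr\!\left[e^{\lambda X}\geq e^{\lambda(np+t)}\right] \;\leq\; e^{-\lambda(np+t)}\,\mathbb{E}[e^{\lambda X}].
\end{equation*}
By independence, $\mathbb{E}[e^{\lambda X}] = \bigl(1-p+p e^{\lambda}\bigr)^n = \bigl(1+p(e^{\lambda}-1)\bigr)^n$, and using the elementary inequality $1+x\leq e^{x}$ one obtains $\mathbb{E}[e^{\lambda X}]\leq \exp\!\bigl(np(e^{\lambda}-1)\bigr)$. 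Setting $\delta = t/(np)\in[0,1]$ and choosing $\lambda=\ln(1+\delta)$ (the minimiser), the bound becomes
\begin{equation*}
\Pr[X\geq (1+\delta)np] \;\leq\; \left(\frac{e^{\delta}}{(1+\delta)^{1+\delta}}\right)^{np}.
\end{equation*}
The key analytic step is then to verify, for $0\leq\delta\leq 1$, the elementary inequality $\delta-(1+\delta)\ln(1+\delta)\leq -\delta^{2}/3$; this can be done by a Taylor expansion of $\ln(1+\delta)$ and a direct comparison of coefficients, yielding $\Pr[X\geq np+t]\leq \exp(-t^{2}/(3np))$.

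For the lower tail, I would run an analogous argument with $\lambda<0$ (equivalently, apply the upper-tail argument to $n-X\sim BIN(n,1-p)$ after a suitable substitution, or directly compute with $\lambda=-\ln(1-\delta)$). One similarly needs $-\delta-(1-\delta)\ln(1-\delta)\leq -\delta^{2}/3$ on $[0,1]$, again via Taylor expansion. This gives $\Pr[X\leq np-t]\leq \exp(-t^{2}/(3np))$. A union bound then yields the two-sided statement
\begin{equation*}
\Pr[\lvert X-np\rvert > t] \;\leq\; 2\,e^{-t^{2}/(3np)}.
\end{equation*}

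The only mildly delicate point is the verification of the two Taylor inequalities that control the exponent (\textit{i.e.}\ that the optimised Chernoff exponent dominates $\delta^2/3$ on the interval $[0,1]$); the restriction $t\leq np$ in the hypothesis is exactly what makes $\delta\in[0,1]$, so these inequalities apply. Everything else -- Markov's inequality, factorisation of the MGF, and optimisation in $\lambda$ -- is routine. I would therefore expect the write-up to be quite short, essentially a citation of \cite{MR02} together with the two-line optimisation sketch.
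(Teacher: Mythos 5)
Your proposal is correct: it is the standard exponential-moment (Chernoff) argument, and the two elementary comparisons $\delta-(1+\delta)\ln(1+\delta)\leq -\delta^2/3$ and $-\delta-(1-\delta)\ln(1-\delta)\leq -\delta^2/3$ do hold for $\delta\in[0,1]$, which is exactly where the hypothesis $t\leq np$ places you, so the union bound gives the stated two-sided estimate. The paper itself offers no proof of this proposition -- it is quoted as a classical tool with a reference to \cite{MR02} -- so your write-up (a citation plus the optimisation sketch) matches the paper's treatment.
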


\begin{proposition}[Lov\'{a}sz Local Lemma -- Symmetric version] \label{prop: Lovasz}
Let $A_1,..., A_n$ be a finite set of events in some probability space $\Omega$ such that each $A_i$ occurs with probability at most $p$, where each $A_i$ is mutually independent of all but at most $d$ other events. If $4pd \leq 1$, then $Pr[\cap_{i=1}^{n} \overline{A_i}] > 0$.
\end{proposition}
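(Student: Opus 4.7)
The plan is to establish the standard stronger inductive statement: for every index $i \in \{1,\dots,n\}$ and every $S \subseteq \{1,\dots,n\} \setminus \{i\}$ for which the event $\bigcap_{j \in S}\overline{A_j}$ has positive probability,
$$\Pr\!\left[A_i \,\Big|\, \bigcap_{j \in S}\overline{A_j}\right] \;\leq\; \frac{1}{2d}.$$
Once this is in hand, the conclusion of the proposition follows immediately from the chain rule:
$$\Pr\!\left[\bigcap_{i=1}^n \overline{A_i}\right] \;=\; \prod_{i=1}^n \Pr\!\left[\overline{A_i}\,\Big|\,\bigcap_{j<i}\overline{A_j}\right] \;\geq\; \left(1-\tfrac{1}{2d}\right)^{n} \;>\; 0,$$
so all the work is really in the strengthened inequality. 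The degenerate case $d=0$ (mutual independence of all the $A_i$) is immediate, since then $\Pr[\bigcap_i \overline{A_i}] = \prod_i (1-\Pr[A_i])$, which is positive as long as $p<1$; hence I assume $d \geq 1$ in what follows.

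The induction is on $|S|$, with the base case $|S|=0$ handled by the hypothesis $\Pr[A_i] \leq p \leq 1/(4d) \leq 1/(2d)$. For the inductive step I would partition $S = S_1 \cup S_2$, where $S_1$ is the set of those $j \in S$ with $A_j$ lying in the dependency neighborhood of $A_i$ and $S_2 = S \setminus S_1$, so that $|S_1| \leq d$ and $A_i$ is mutually independent from $\{A_j : j \in S_2\}$. I would then write
$$\Pr\!\left[A_i \,\Big|\, \bigcap_{j \in S}\overline{A_j}\right] \;=\; \frac{\Pr\!\left[A_i \cap \bigcap_{j \in S_1}\overline{A_j} \,\Big|\, \bigcap_{j \in S_2}\overline{A_j}\right]}{\Pr\!\left[\bigcap_{j \in S_1}\overline{A_j} \,\Big|\, \bigcap_{j \in S_2}\overline{A_j}\right]},$$
bound the numerator by $\Pr[A_i \mid \bigcap_{j \in S_2}\overline{A_j}] = \Pr[A_i] \leq p$ (using monotonicity and mutual independence), and bound the denominator from below using the union bound together with the inductive hypothesis applied at conditioning set $S_2$ (which is strictly smaller than $S$) to each $j \in S_1$:
$$\Pr\!\left[\bigcap_{j \in S_1}\overline{A_j} \,\Big|\, \bigcap_{j \in S_2}\overline{A_j}\right] \;\geq\; 1 - \sum_{j \in S_1}\Pr\!\left[A_j \,\Big|\, \bigcap_{k \in S_2}\overline{A_k}\right] \;\geq\; 1 - \frac{|S_1|}{2d} \;\geq\; \frac{1}{2}.$$
Dividing yields $2p \leq 1/(2d)$, closing the induction.

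There is no conceptually hard step; the only delicate point is keeping track of well-definedness of the conditional probabilities, i.e.\ ensuring that each conditioning event has positive probability throughout the induction. This is automatic from the induction itself, because at every stage the bound $\Pr[\overline{A_j} \mid \cdots] \geq 1 - 1/(2d) > 0$ has already been established, so by the chain rule all the intermediate intersections $\bigcap_{j \in S_2}\overline{A_j}$ remain strictly positive. The main obstacle is thus purely organizational: choosing the right decomposition $S = S_1 \cup S_2$ so that mutual independence enters cleanly in the numerator while the inductive hypothesis is applicable to each term in the denominator.
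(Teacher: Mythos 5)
Your argument is correct, but note that the paper does not prove this proposition at all: it is quoted as a classical tool of the probabilistic method with a citation to Molloy and Reed, so there is no in-paper proof to compare against. What you give is the standard textbook induction for the symmetric Local Lemma, and it goes through: the split $S=S_1\cup S_2$ with $|S_1|\le d$, the numerator bound $\Pr[A_i\mid\bigcap_{j\in S_2}\overline{A_j}]=\Pr[A_i]\le p$ via mutual independence, the denominator bound $\ge 1-|S_1|/(2d)\ge 1/2$ via the union bound and the inductive hypothesis (vacuous, and harmless, when $S_1=\emptyset$), and the chain-rule argument that keeps every conditioning event of positive probability. Two cosmetic points: the sentence ``dividing yields $2p\le 1/(2d)$'' should read that the ratio is at most $p/(1/2)=2p\le 1/(2d)$, the last inequality being exactly the hypothesis $4pd\le 1$; and in the degenerate case $d=0$ the hypothesis $4pd\le 1$ formally permits $p=1$, where the conclusion can fail, so your restriction to $p<1$ there is the right (and standard) reading of the statement --- irrelevant for the paper's application, where $d=r^2\ge 1$.
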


We will also be needing the following fact on the existence of $k$ (not necessarily disjoint) cycles with distinct lengths in a digraph with minimum out-degree at least~$k$.

\begin{proposition} \label{lemma:k-cycle-lengths}
For every $k \geq 1$, every digraph $D$ with minimum out-degree at least $k$ contains $k$ directed cycles of distinct lengths.
\end{proposition}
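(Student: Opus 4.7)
The plan is a direct longest-path argument, entirely analogous to the one that proves Proposition~\ref{ThCDL} in the undirected setting, but slightly cleaner because in digraphs an arc never contributes a ``useless'' neighbor the way the predecessor on a path does in the undirected case.

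First I would fix a longest directed path $P = (v_1, v_2, \ldots, v_p)$ in $D$. Since $P$ cannot be extended forward from $v_p$, every out-neighbor of $v_p$ must already lie on $P$. By hypothesis $v_p$ has at least $k$ out-neighbors, and (assuming, as is standard, that $D$ has no loops) none of them is $v_p$ itself. So $v_p$ has $k$ out-neighbors $v_{i_1}, v_{i_2}, \ldots, v_{i_k}$ with $1 \leq i_1 < i_2 < \cdots < i_k \leq p-1$.

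Next I would produce the cycles. For each $j \in \{1, \ldots, k\}$, the subpath $(v_{i_j}, v_{i_j+1}, \ldots, v_p)$ together with the arc $v_p v_{i_j}$ forms a directed cycle $C_j$ of length exactly $p - i_j + 1$. Because the indices $i_1, \ldots, i_k$ are pairwise distinct, the lengths $p - i_1 + 1, \ldots, p - i_k + 1$ are pairwise distinct as well, giving $k$ directed cycles of distinct lengths as required.

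There is no real obstacle here; the only point to be careful about is confirming that all $k$ out-neighbors of $v_p$ really lie on $P$, which is immediate from the maximality of $P$, and that we do not accidentally use a loop $v_p v_p$, which we exclude under the usual convention that digraphs have no loops (and if loops are allowed, a loop is itself a directed cycle of length $1$ that is distinct from every cycle on $P$, so the statement still holds). The argument uses no prior result beyond the definition of minimum out-degree.
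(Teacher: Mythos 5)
Your argument is correct and is exactly the paper's own proof, which simply says to consider the out-neighbours of the last vertex of a longest directed path; you have spelled out the details (all $k$ out-neighbours lie on the path by maximality, and each yields a cycle of a different length). No further comment is needed.
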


\begin{proof}
Consider the out-neighbours of the last vertex of a longest directed path in $D$.
\end{proof}
\bigskip

We are now ready to prove the main result of this section. By a \textit{$r$-regular digraph}, we refer to a digraph whose all vertices have in- and out-degree $r$.

\begin{theorem} \label{theorem:regular}
Let $k \geq 1$ and $r \geq \frac{k^2}{2}(1+7(\frac{\ln k}{k})^\frac{1}{3})$. Then every $r$-regular digraph contains at least $k$ vertex-disjoint directed cycles of distinct lengths.
\end{theorem}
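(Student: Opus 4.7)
The plan is to establish the theorem by a probabilistic random partition of $V(D)$, combined with Hall's marriage theorem applied on top of Proposition~\ref{lemma:k-cycle-lengths}. The objective is to produce a partition $V(D)=V_1\sqcup\dots\sqcup V_k$ with the property that $\delta^+(D[V_i])\geq i$ for every $i\in[k]$. Once such a partition is available, Proposition~\ref{lemma:k-cycle-lengths} produces, inside each $D[V_i]$, a set $L_i$ of at least $i$ pairwise distinct cycle lengths, so for every $S\subseteq[k]$, setting $m=\max S$, one has $|L_m|\geq m\geq|S|$, hence $|\bigcup_{i\in S}L_i|\geq|L_m|\geq|S|$. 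Hall's condition is therefore satisfied, a system of distinct representatives exists, and choosing in each $V_i$ a directed cycle of the selected length yields $k$ vertex-disjoint directed cycles of distinct lengths.

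To build the partition, I would colour each vertex of $D$ independently at random, giving it colour $i$ with probability $p_i=\tfrac{2i}{k(k+1)}$, so that $\sum_i p_i=1$ and $p_i$ is proportional to $i$. For $v\in V_i$, the number $X_v$ of out-neighbours of $v$ lying in $V_i$ is binomial with mean $\mu_i=rp_i$, and a short calculation using the hypothesis on $r$ gives $\mu_i\geq i(1+\delta)$ for some $\delta=\Theta\bigl((\ln k/k)^{1/3}\bigr)$. Chernoff's inequality (Proposition~\ref{prop: Chernoff}) then bounds the probability of the bad event $A_v=\{X_v<i\}$ by $2\exp\!\bigl(-c\,i(\ln k/k)^{2/3}\bigr)$ for an absolute constant $c>0$. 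Each such event depends only on the colours of $v$ and of the $r$ out-neighbours of $v$, so it is mutually independent of all but $O(r^2)$ other bad events (those indexed by vertices $u$ with $\{u\}\cup N^+(u)$ meeting $\{v\}\cup N^+(v)$). The condition $4\Pr[A_v]\cdot O(r^2)\leq 1$ of the Lov\'asz Local Lemma (Proposition~\ref{prop: Lovasz}) then reduces, after taking logarithms, to the requirement $i\cdot(\ln k/k)^{2/3}\gtrsim\ln r$; since $\ln r=\Theta(\ln k)$, this is comfortably satisfied whenever $i$ is not too small.

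The principal obstacle in this approach lies in the small-$i$ regime, where $\mu_i\approx i$ is itself too small for Chernoff to yield any useful concentration. My strategy to overcome it is to separate the proof into two phases. First, a short deterministic phase extracts from $D$ a family of $i_0$ pairwise vertex-disjoint directed cycles of $i_0$ distinct short lengths, for some threshold $i_0\asymp k^{2/3}(\ln k)^{1/3}$; this phase uses Proposition~\ref{lemma:k-cycle-lengths} iteratively, noting that each removal of a short cycle costs only $O(i_0)$ from the minimum out-degree, so that after $i_0$ steps the cumulative loss is only $O(i_0^2)$. Second, the random partition phase described above is applied to the remaining subdigraph (still of order at least $r-O(i_0^2)$ and of sufficiently high minimum out-degree) but split into only $k-i_0$ colour classes indexed by $i>i_0$; the slack $\tfrac{7}{2}k^{5/3}(\ln k)^{1/3}$ in the hypothesis on $r$ is exactly what absorbs the trade-off between $i_0$ and the Chernoff threshold. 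The final application of Hall's theorem proceeds as before, once the lengths of the cycles extracted in the first phase are excluded from the sets $L_i$ produced in the second: for every $S\subseteq\{i_0+1,\dots,k\}$ with $m=\max S$, the set $L_m$ still has size at least $m$, and subtracting at most $i_0$ short lengths leaves $|L_m\setminus\{\text{short lengths}\}|\geq m-i_0\geq|S|$, so Hall's condition continues to hold.
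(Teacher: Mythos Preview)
Your overall architecture matches the paper: random colouring, Chernoff on the out-neighbour counts, and the symmetric Lov\'asz Local Lemma with dependency degree $O(r^2)$, followed by Proposition~\ref{lemma:k-cycle-lengths} inside each part. You also correctly isolate the real difficulty, namely that with $p_i\propto i$ the mean $\mu_i=rp_i$ is of order $i$, so Chernoff gives nothing useful for $i\ll k^{2/3}(\ln k)^{1/3}$. The Hall argument you give at the end is sound, though unnecessary: the paper simply picks a cycle from each $V_i$ greedily, which works because $D[V_i]$ offers $i$ distinct lengths while only $i-1$ have been used.

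The genuine gap is in your proposed fix for small $i$. Your deterministic phase relies on iteratively removing a \emph{short} directed cycle (of length $O(i_0)$) so that after $i_0$ steps the cumulative drop in minimum out-degree is $O(i_0^2)$. But Proposition~\ref{lemma:k-cycle-lengths} gives no control whatsoever on the lengths of the cycles it produces: it only says there are $r$ cycles of distinct lengths through the terminal vertex of a longest path, and those lengths can all be large. In an $r$-regular digraph the girth can be of order $n/r$ with $n$ arbitrary, so the shortest cycle you can remove may already have more than $i_0^2$ vertices, wiping out your out-degree budget in a single step. Nothing in your sketch explains how to find short cycles, and there is no general bound on digraph girth in terms of the out-degree alone.

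The paper bypasses this issue by a much simpler device: instead of $p_i\propto i$, it takes
\[
p_j=\frac{j+\lfloor k^{2/3}(\ln k)^{1/3}\rfloor}{s},\qquad s=\binom{k'+1}{2},\quad k'=k+\lfloor k^{2/3}(\ln k)^{1/3}\rfloor,
\]
i.e.\ it shifts every numerator up by $\lfloor k^{2/3}(\ln k)^{1/3}\rfloor$. Now even $rp_1\asymp k^{2/3}(\ln k)^{1/3}$, which is exactly the scale at which the Chernoff exponent $\Theta\bigl((\ln k/k)^{2/3}\,rp_j\bigr)$ becomes $\Theta(\ln k)$ and beats the $r^2$ dependency factor in the Local Lemma uniformly over all $j$. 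No deterministic preprocessing is needed, and the $7(\ln k/k)^{1/3}$ slack in the hypothesis is precisely what pays for this shift (since $s$ grows to about $\tfrac{k^2}{2}(1+(\ln k/k)^{1/3})^2$). This single-line adjustment is the missing idea in your argument.
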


\begin{proof}
Let $D$ be a simple $r$-regular digraph, and assume $r=\frac{k^2}{2}(1+7(\frac{\ln k}{k})^\frac{1}{3})$. The proof reads as follows. The main idea is to prove, using several probabilistic tools, that we can partition the vertex set of $D$ into $k$~parts $V_1,...,V_k$ such that each $V_i$ induces a digraph of minimum out-degree at least~$i$. With such a partition in hand, one can then get the~$k$ desired disjoint cycles by just considering each of the $V_i$'s successively, and picking, in each of the digraphs they induce, one cycle whose length is different from the lengths of the previously picked cycles. This is possible according to Proposition~\ref{lemma:k-cycle-lengths} due to the out-degree property of the partition $V_1,...,V_k$.

We first introduce some notation and assumptions we use throughout (and further) to deal with our computations. Every parameter has to be thought of as a function of $k$. By writing $o(1)$, we refer to a term tending to $0$ as $k$ tends to infinity. Given two terms $a$ and $b$, we write $a \thicksim b$ if $a/b$ tends to $1$, and $a \ll b$ if $a/b$ tends to $0$ as $k$ tends to infinity. Let $k':=k+\lfloor k^{2/3}(\ln k)^{1/3}\rfloor$ and $s$ be the sum of the first $k'$ integers, that is $s:=\frac{k'(k'+1)}{2}$. It is assumed throughout that $r \thicksim s \thicksim \frac{k^2}{2}$; so $r$ and $s$ will sometimes freely be replaced by $\frac{k^2}{2}$ to simplify some computation (assuming this has no impact on the computation).

We now show that, under our assumptions, the desired partition $V_1,...,V_k$ of $V(D)$ exists with non-zero probability. For this purpose, let us just randomly $k$-color the vertices of $D$, where assigning color~$j$ to some vertex means that we put it into $V_j$. All colors are not assigned uniformly, but in such a way that, for every color $j \in \{1,...,k\}$, the probability $p_j$ that some vertex is colored~$j$ is:
\begin{displaymath}
p_j := \frac{j+\lfloor k^{2/3}(\ln k)^{1/3}\rfloor}{s}.
\end{displaymath}
Note that $\sum_{j=1}^k{p_j}<1$, therefore a vertex gets no color with probability $1-\sum_{j=1}^k{p_j}$.

Let $X_v^j$ denote the number of out-neighbors of $v$ colored $j$ by the random process. Clearly $X_v^j \thicksim BIN(r,p_j)$. So that we can later apply the Lov\'asz Local Lemma, let us define our set of bad events. Let $A_v$ be the event $$A_v := \bigcup_{j=1}^{k}(v \textrm{ is colored } j \textrm{ and } X_v^j<j ).$$ Second, let $B$ be the event that at least one of the~$k$ colors does not appear among a fixed subset $\{u_1, u_2, ..., u_r\}$ of $r$ vertices of $D$. It should be clear that any two events $A_v$ and $A_u$ are dependent if $(u \cup N^+(u)) \cap (v \cup N^+(v)) \ne \emptyset$. So, since $D$ is $r$-regular, each $A_v$ depends on at most $r^2$ other bad events $A_u$. Since the event $B$ only depends on the colors of~$r$ fixed vertices, similarly $B$ depends on at most $r^2$ other bad events. To apply the Lov\'asz Local Lemma, every bad event $A \in (\cup_{v}A_v)\cup B$ must hence fulfill $4r^2\Pr(A)\leq 1$.

Concerning $B$, we have:
\begin{align*}
\Pr(B) \leq \sum_{j=1}^{k}(1-p_j)^r &\leq k \left(1-\frac{\lfloor k^{2/3} (\ln k)^{1/3} \rfloor+1}{s} \right)^r \\
&\leq ke^{-\frac{k^{2/3} (\ln k)^{1/3} r}{s}} \\
&\leq ke^{- k^{2/3} (\ln k)^{1/3}}.
\end{align*}

\noindent Therefore, we have that:

\begin{equation*}
4r^2 \Pr(B) \leq 4\left(\frac{k^2}{2}+ \frac{7}{2}k^\frac{5}{3}(\ln k)^\frac{1}{3}  \right)^2ke^{-k^{2/3} (\ln k)^{1/3}} \leq 1.
\end{equation*}

 Now consider the $A_v$'s. Since
$$ s=\frac{(k')^2}{2}\left(1+\frac{1}{k'}\right) \leq \frac{k^2}{2}\left(1+\left(\frac{\ln k}{k}\right)^\frac{1}{3}\right)^2\left(1+\frac{2}{k}\right),  $$
\noindent then
$$ \left(1-4\left(\frac{\ln k}{k}\right)^\frac{1}{3}\right)rp_j \geq \left(1-4\left(\frac{\ln k}{k}\right)^\frac{1}{3}\right)\frac{rj}{s} \geq j.   $$

\noindent Applying Chernoff's Inequality, we get:
\begin{align*}
\Pr( X_v^j<j ) &\leq \Pr\left( X_v^j<\left(1-4\left(\frac{\ln k}{k}\right)^\frac{1}{3}\right)rp_j \right) \\
&\leq e^{-16\left(\frac{\ln k}{k}\right)^\frac{2}{3}\frac{rp_j}{3}}.
\end{align*}

\noindent Then:
\begin{displaymath}
\Pr(A_v) \leq \sum_{j=1}^{k}{p_j e^{-16\left(\frac{\ln k}{k}\right)^\frac{2}{3}\frac{rp_j}{3}}}.
\end{displaymath}

We hence want every term to be smaller than $\frac{1}{4r^2k}$, which is achieved as soon as it is verified for $p_j = \frac{\lfloor k^{2/3}(\ln k)^{1/3}\rfloor+1}{s}$. Indeed:
$$ \frac{\lfloor k^{2/3}(\ln k)^{1/3}\rfloor+1}{s} e^{-16\left(\frac{\ln k}{k}\right)^\frac{2}{3}\frac{r}{3}\frac{\lfloor k^{2/3}(\ln k)^{1/3}\rfloor+1}{s}} \leq \frac{1}{4r^2k}
\Leftrightarrow  e^{\frac{16}{3}\ln k} \geq 2k^\frac{11}{3}(\ln k)^\frac{1}{3}.  $$

Under all these conditions, the requirements for applying the Lov\'asz Local Lemma are met; we can hence deduce the desired partition $V_1,...,V_k$, thus the claimed $k$ disjoint cycles of distinct lengths.
\end{proof}


\subsection{Small digraphs} \label{section:small}

We now prove Conjecture~\ref{conjecture:digraph-lengths} for digraphs whose order can be expressed as some particular function of the minimum out-degree.

\begin{theorem} \label{theorem:order}
Let $k \geq 1$. Then every simple digraph of order at most $c^{r^d}$ (where $c$ and $d$ are two constants satisfying $c>1$ and $0<d <1$) and minimum out-degree $r \geq c_0\max\{k^{\frac{1}{1-d}}, k^2\}$ contains at least $k$ vertex-disjoint directed cycles of distinct lengths, where $c_0=\max\{2,(24\ln c)^\frac{1}{1-d}\}$.
\end{theorem}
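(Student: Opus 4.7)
The plan is to follow the template used for Theorem \ref{theorem:regular}: randomly partition $V(D)$ into colour classes $V_1, \ldots, V_k$ and show that, with positive probability, each induced subdigraph $D[V_j]$ is non-empty and has minimum out-degree at least $j$. Given such a partition, Proposition \ref{lemma:k-cycle-lengths} applied to $D[V_j]$ yields $j$ directed cycles of pairwise distinct lengths inside $V_j$; a greedy selection in the order $V_1, V_2, \ldots, V_k$, where at step $j$ one picks any cycle of $D[V_j]$ whose length differs from the $j-1$ previously chosen ones, then produces $k$ vertex-disjoint directed cycles of pairwise distinct lengths (such a choice is always possible, since $D[V_j]$ offers $j$ distinct cycle lengths against only $j-1$ forbidden values).

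The key difference with Theorem \ref{theorem:regular} is that $D$ is no longer regular, so its maximum in-degree can be arbitrarily large and the Lov\'asz Local Lemma is not directly applicable. I would therefore replace it by a plain union bound, using the hypothesis $n \leq c^{r^d}$ to control the total contribution of the bad events. Assign every vertex an independent, uniformly random colour from $\{1, \ldots, k\}$, so $p_j := 1/k$ for every $j$. Consider the bad events $A_v$ (for $v \in V(D)$), namely that $v$ is coloured $j$ and has fewer than $j$ out-neighbours coloured $j$ for some $j$; and $B_j$ (for $1 \leq j \leq k$), namely that no vertex is coloured $j$. If none of these events occurs then each $D[V_j]$ is non-empty and has minimum out-degree at least $j$, as required.

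For the $A_v$ bound, the number $X_v^j$ of out-neighbours of $v$ coloured $j$ is binomial with parameters $(d^+(v), 1/k)$ and mean $\mu_j \geq r/k$. Since $c_0 \geq 2$ forces $r \geq 2k^2$, we have $\mu_j \geq 2k \geq 2j$ for every $j \leq k$, which lets me apply Proposition \ref{prop: Chernoff} with deviation $t = \mu_j/2$ to obtain $\Pr(X_v^j < j) \leq 2 e^{-\mu_j/12} \leq 2 e^{-r/(12k)}$. Averaging over $j$ gives $\Pr(A_v) \leq 2 e^{-r/(12k)}$, whence $\sum_v \Pr(A_v) \leq 2 n e^{-r/(12k)}$. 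Using $\ln n \leq r^d \ln c$, this quantity is smaller than $1/2$ as soon as $r^{1-d} > 24 k \ln c$, a condition forced by $r \geq c_0 k^{1/(1-d)}$ together with $c_0 \geq (24 \ln c)^{1/(1-d)}$.

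For the $B_j$ bound, $\Pr(B_j) = (1 - 1/k)^n \leq e^{-n/k}$, and the simplicity of $D$ forces $n \geq r+1 \geq 2k^2 + 1$, so $\sum_j \Pr(B_j) \leq k e^{-2k} < 1/2$. The union bound then gives $\sum_v \Pr(A_v) + \sum_j \Pr(B_j) < 1$, producing the desired partition with positive probability. The main delicate point I expect is calibrating the Chernoff slack so that the exponential decay $e^{-r/(12k)}$ absorbs the allowed order $n \leq c^{r^d}$; this is precisely where the explicit form $c_0 \geq (24 \ln c)^{1/(1-d)}$ of the hypothesis enters.
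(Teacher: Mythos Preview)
Your proposal is correct and follows essentially the same route as the paper: a uniform random $k$-colouring, Chernoff to bound the probability that a vertex has too few same-coloured out-neighbours, a union bound (rather than the Local Lemma, since in-degrees are uncontrolled) over all vertices together with the ``missing colour'' events, and finally Proposition~\ref{lemma:k-cycle-lengths} to extract the cycles. The only cosmetic differences are that the paper asks each part to have minimum out-degree at least $k$ (rather than your $\geq j$ in part $j$), uses a one-sided Chernoff bound without your factor $2$, and bounds $\Pr(B)$ via $(1-1/k)^n\le(1-1/k)^r$ instead of your $n\ge r+1\ge 2k^2+1$; none of this changes the argument or the arithmetic in any essential way.
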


\begin{proof}
Let $D$ be such a digraph with order $n$ where we assume the out-degree of every vertex is exactly~$r$. We herein reuse the terminology introduced in the proof of Theorem~\ref{theorem:regular}.

The proof is similar to that of Theorem~\ref{theorem:regular}, except that the random $k$-coloring of the vertices of $D$ is this time performed uniformly (all $k$ colors have the same probability to be assigned). Given any vertex $v$ of $D$, let $X_v$ denote the number of out-neighbors of $v$ being assigned the same color by the random coloring. Clearly $X_v \thicksim BIN(r,1/k)$. Our two kinds of bad events are the following. First $A_v$ is, for every vertex $v$, the event that $X_v<k$. Second, let $B$ be the event that at least one of the $k$ colors does not appear at all.

We first assume the following:

\begin{equation} \label{E:2:4}
\frac{r}{2k} \geq k \Leftrightarrow r\geq 2k^2.
\end{equation}

\noindent Therefore, applying Chernoff's Inequality, we have:
$$ \Pr(A_v) = \Pr(X_v<k) \leq \Pr\left(X_v< \frac{r}{2k}\right) \leq e^{-\frac{r}{12k}}.$$

\noindent By the Union Bound, we deduce:
\begin{align} \label{eq1}
\Pr\left(\sum_v{A_v} + B\right) \leq \sum_v{\Pr(A_v)} + \Pr(B) & \leq ne^{-\frac{r}{12k}} + k \left(1-\frac{1}{k}\right)^n \nonumber \\
 & \leq ne^{-\frac{r}{12k}} + k\left(1-\frac{1}{k}\right)^r \nonumber \\
 & \leq ne^{-\frac{r}{12k}} + ke^{-\frac{r}{k}}.
\end{align}

\noindent Since we want this obtained sum to be smaller than $1$, we would like each of its two terms to be strictly smaller that $1/2$. Then, concerning the first term:

\begin{equation*}
ne^{-\frac{r}{12k}} < \frac{1}{2} \Leftrightarrow \frac{r}{12k} > \ln(2n).
\end{equation*}

\noindent Since $n\leq c^{r^d}$, we have $\ln(2n) \leq 2r^d\ln c$. So we need a stronger inequality:

\begin{equation} \label{E:2:5}
\frac{r}{12k} \geq 2r^d\ln c \Leftrightarrow r \geq (24k\ln c)^{\frac{1}{1-d}}.
\end{equation}

\noindent Concerning the second term of Inequality~(\ref{eq1}), we want:

\begin{equation} \label{E:2:6}
ke^{-\frac{r}{k}} < \frac{1}{2} \Leftrightarrow r > k\ln(2k).
\end{equation}

Suppose now that Inequalities~(\ref{E:2:5}) and (\ref{E:2:6}) hold. Then there exists a partition $V_1, ..., V_k$ of $V(D)$ such that the out-degree of each vertex in the part containing it is at least $k$. We are now able to successfully pick a cycle from each of these parts in such a way that all picked cycles have distinct lengths (by applying Proposition~\ref{lemma:k-cycle-lengths}).

So that all conditions of Inequalities~(\ref{E:2:4}), (\ref{E:2:5}) and (\ref{E:2:6}) are met, we then just need $r \geq c_0\max\{k^{\frac{1}{1-d}}, k^2\}$, where $c_0=\max\{2,(24\ln c)^\frac{1}{1-d}\}$, as claimed.
\end{proof}

\subsection{Concluding remarks} \label{section:ccl-digraphs}

In Sections~\ref{section:regular} and~\ref{section:small}, we have proved that Conjecture~\ref{conjecture:digraph-lengths} holds 
for regular digraphs (Theorem~\ref{theorem:regular}) and digraphs with bounded order (Theorem~\ref{theorem:order}). About these two results, let us mention the following:

\begin{enumerate}
    \item In the proofs of Theorems~\ref{theorem:regular} and~\ref{theorem:order}, if we require the parts of the partition $V_1,...,V_k$ to include more vertices, then we can deduce $k$ distinct cycles whose lengths are 'more than just distinct'. Possible such additional properties are \textit{e.g.} the cycles to be of even or odd lengths only, or to be of lengths divisible by some fixed integer, etc.

    \item In the statement of Theorem~\ref{theorem:order}, it is worth mentioning that we can lower the requirement on the out-degree if we assume a smaller upper bound on $n$. For example, if $n$ is bounded above by a polynomial function of $r$ ($n < r^d$ for some constant $d > 0$), then we can require the digraph to have minimum out-degree at least $k^2(\ln k)^3$ only (which sticks closer to what is stated in Conjecture~\ref{conjecture:digraph-lengths}).

    \item In case Conjecture~\ref{conjecture:digraph-lengths} is false and some counterexamples exist, Theorem~\ref{theorem:order} gives a lower bound on the order of these counterexamples. These should be of large order.
\end{enumerate}

\end{document}